\newtheorem{theo}{Theorem}
\newtheorem{definition}{Definition}
\newtheorem{proposition}{Proposition}
\newtheorem{lemme}{Lemma}
\newtheorem{corollary}{Corollary}
\newtheorem{rems}{Remarks}
\title{First passage time law for some L\'evy processes with compound Poisson: existence 
of a conditional density with incomplete observation\footnote{I thank M. Pontier for her careful reading.}}
\author{Waly NGOM\footnote{ IMT University of Toulouse, France, waly.ngom@math.univ-toulouse.fr
-F.S.T University of Dakar, S\'en\'egal, waly.ngom@ucad.edu.sn}}
\begin{document}
\maketitle
\begin{abstract}
\noindent
We study the default risk in incomplete information. That means, we model the value of a
firm by one L\'evy process which is the sum of Brownian motion with drift and compound
Poisson process. This L\'evy process can not be observed completely and we let 
another process which represents the available information on the firm. We obtain an equation 
satisfied by the conditional density of the default time given the available information. 
\end{abstract}
\textbf{Acknowledgemnts.} I thank my Ph.D advisors Laure Coutin and Papa Ngom for their 
help and pointing out error.This work is supported by A.N.R. Masterie.

\section{Introduction}
In our work, we study a first passage time of a level $x>0$ by a jump diffusion process $X$ which respectively 
models default and assets of a firm. We investigate the behavior of the default time under incomplete observation
of assets. Such a study is very 
important when failure of a large industrial company or some great political decision taken by the parliement can affect
the dividend policy of the issuing firms. In the litterature, we find some papers in relation to this topic.
Duffie and Lando in \cite{duffielando} suppose that bond investors cannot observe the issuer's assets directly and 
receive instead only periodic and imperfect reports. For a setting in which the assets of the firm are a geometric Brownian
motion until informed equityholders optimally liquidate, they derive the conditional distribution of the assets, given
accounting data and survivorship.
Dorobantu  \cite{Dorobantu} provides the intensity function of the default time. That is very important for investors, but the 
information brought by this intensity is low.
Volpi et al. \cite{volpi} prove that the Laplace transform of the random triple (first passage time, overshoot, undershoot)
satisfies some kind of integral equation and after normalization of the first passage time, they show under some 
assumptions that the triple random converges in distribution as $x$ goes to $\infty.$
In \cite{gapeev}, the authors study a model of a financial market in which the dividend rates of two risky assets change 
their initial values to other constant ones at the times at which certain unobservable external events occur. The asset 
price dynamics are described by geometric Brownian motion with random drift rates switching at exponential random times which
are independent of each other and of the constantly correlated driving Brownian motion. They obtain closed expressions
for rational values of European contingent claims through the filtering estimates of the occurence of switching times 
and their conditional probability density derived given the filtration generated by the underlying asset price process.
Coutin and Dorobantu in \cite{Coutin-Dorobantu}  prove that the law of the default time has a density (defective when 
$\mathbb{E}(X_{1})< 0$) with respect to the Lebesgue measure 

for PAIS with compound Poisson process and not null Brownian motion

We extend this approach by using some filtering theory.
The purpose of our paper is to add to these studies the behavior of the conditional law of a first passage time
by a L\'evy process with compound Poisson process given partial information.
The paper is organized as follow: In Section 2, we recall the model and the filtering framework. In Section 3, we show the 
existence of the density and in Section 4, we show that
the conditional density satisfies some kind of integro-differential equation.To finish, we give some technical proofs
in the Appendix.

\section{Models and filtering framework}
In this section, we recall the model and the results of Coutin and Dorobantu \cite{Coutin-Dorobantu} and we present the filtering 
framework of Pardoux \cite{Pardoux} or Coutin \cite{Coutin}.
\subsection{Model}
Let $(\Omega,  \mathcal{F}, (\mathcal{F}_{t})_{t\geq 0}, \mathbb{P})$ be a filtered probability space .\\
Let $\tilde{X}$ be a Brownian motion with drift $ m\in \mathbb{R}$ and for $z> 0, \tilde{\tau}_{z}=
\inf\{ t\geq 0, \tilde{X}_{t} \geq z\}.$ By $(5.12)$ page 197 of \cite{Karatzas}, $\tilde{\tau}_{z}$ has the following
law on $\bar{\mathbb{R}}_{+}:$
\begin{equation}
 \tilde{f}(u,z)du+\mathbb{P}(\tilde{\tau}_{z}=\infty)\delta_{\infty}(du)
\end{equation}
where
\begin{equation*}
 \tilde{f}(u,z)=\frac{\mid z \mid}{\sqrt{2\pi u^{3}}}\exp[-\frac{1}{2u}(z-mu)^{2}]{\bf 1}_{]0,+\infty[}(u)
 \mbox{ and }\mathbb{P}(\tilde{\tau}_{z}=\infty)=1-e^{mz-\mid mz \mid}.
\end{equation*}
The function $\tilde{f}(.,z)$ is  $C^{\infty} $ on $]0,+\infty[$, 
and all its derivatives admit 0 as right limit at 0 and then belongs to $\mathcal{C}^{\infty}$ .
Let $X$ be a  L\'evy process defined as following
\begin{equation}
  X_{t} = mt + W_{t} + \sum_{i=1}^{N_{t}}Y_{i} \quad t\in \mathbb{R}^{+}.
\end{equation}
Here $ (W_{t})_{t\geq 0}$ is a Brownian motion, $m \in \mathbb{R}$ ,
 $(N_{t})_{t\geq 0} $ is a counting Poisson process with intensity $\lambda$ and $( Y_{i})_{i\in \mathbb{N}^{*}}$ 
a sequence
of identically and independent (iid) random variables with distribution $ F_{Y}.$ All these objects are independent.
The process $X $ models the firm value.\\
The default is modeled by the hitting time of level $x>0.$ That means 
\begin{equation}
 \tau_{x} = \inf\{ t > 0: X_{t} \geq x\} .
\end{equation}
We recall from Coutin and Dorobantu \cite{Coutin-Dorobantu} that $\tau_{x} $ has a density $f(.,x)$ with respect to 
Lebesgue measure possibly defective which is defined by 
 \begin{equation}\label{density}
 f(t,x)=\left\{ \begin{array}{ll}
                   \lambda\mathbb{E}(1_{\tau_{x}>t}(1-F_{Y})(x-X_{t}))+ \mathbb{E}(1_{\tau_{x} > T_{N_{t}}}
                   \tilde{f}(t-T_{N_{t}},x-X_{T_{N_{t}}})) & \quad \mbox{ if } t > 0\\
                   \frac{\lambda}{2}(2-F_{Y}(x)-F_{Y}(x_{-}))+\frac{\lambda}{4}(F_{Y}(x)-F_{Y}(x_{-})) & \mbox{ if } t=0.\\ 
                 \end{array} \right. 
\end{equation}
 where $(T_{i}, i \in \mathbb{N}^{*})$ is the sequence of the jump times of the process $N$.

\subsection{Filtering framework}
Instead of observing perfectly the process $ X $, we observe a process $ Q $ defined by 
              $$ Q_{t}=\int_{0}^{t}h(X_{s})ds+ B_{t},\quad t\in \mathbb{R}_{+} $$
where $ h $ is a Borel and bounded function, $ B $ a Brownian motion which is independent of $ W, N, $ and $ (Y_i, i \geq 1) $ .
This is a filtering problem and we introduce the framework as in [5] and [1].\\
Let $ ( \Omega^{Q}, \mathcal{F}^{Q},( \mathcal{F}_{t}^{Q}, t\geq 0), \mathbb{P}^{Q} ) $ (respectively
$ ( \Omega^{W}, \mathcal{F}^{W},( \mathcal{F}_{t}^{W}, t\geq 0), \mathbb{P}^{W} ) $ ) be a measured space on which $ Q $ 
(respectively $ W $ ) is a $\mathbb{R}-$ valued Brownian motion.\\
Let $ ( \Omega^{M}, \mathcal{F}^{M},( \mathcal{F}_{t}^{M}, t\geq 0), \mathbb{P}^{M} ) $ be a measured space on which 
$(Y_{i}, i\in \mathbb{N}^{*})$ is a sequence of i.i.d random variables with distribution function $ F_{Y}$ and $ M $ a Poisson
random measure with intensity $ \Pi(dt, A)= \lambda\int_{A}F_{Y}(dy)dt, \quad \lambda > 0. $ We define 
\begin{eqnarray*}
   \Omega^{\circ}=\Omega^{Q} \times \Omega^{W} \times \Omega^{M} &  & \mathcal{F}=\mathcal{F}^{Q} \otimes \mathcal{F}^{W}
   \otimes \mathcal{F}^{M} \\
   \mathbb{P}^{\circ}= \mathbb{P}^{Q}\otimes \mathbb{P}^{W} \otimes \mathbb{P}^{M} &  & \mathcal{F}_{t}= 
   \mathcal{F}_{t}^{Q} \otimes \mathcal{F}_{t}^{W}\otimes \mathcal{F}_{t}^{M}, t \in \mathbb{R}_{+}.
\end{eqnarray*}
All used filtrations are c\`ad and complete. 
Under $\mathbb{P}^{\circ}, $ we consider the pair processes $(X,Q)$ defined by
 \begin{eqnarray*}
    X_{t} & = & mt +W_{t} +\sum_{i=1}^{N_{t}}Y_{i} \\
    Q_{t} & = &\mbox{ Brownian motion}, t\in \mathbb{R}.
 \end{eqnarray*}
\begin{rems}
\begin{enumerate}
\item $(X,Q)$ has stationary and independent increments under $\mathbb{P}^{0}$,
it is then a $(\mathbb{P}^o,\mathcal{F})-$ Markov process.
 \item The process $(W, Q ) $ is a $\mathbb{R}^{2}-$ valued $ (\mathbb{P}^{\circ},\mathcal{F})-$ Brownian motion .

 \item The compensated  measure $\tilde{M} $ has $ (\mathbb{P}^{\circ},\mathcal{F})-$intensity 
 $ \Pi(dt, A)= \lambda \int_{A}F_{Y}(dy)dt.$ 
\end{enumerate}
\end{rems}
 Since the function $ h $ is bounded the Novikov condition,
$\forall t,~~ \mathbb{E}\left( e^{\frac{1}{2}\int_{0}^{t}h^{2}(X_{s})ds}\right) < \infty ,$ is satisfied and we define 
the following
exponential martingale for the filtration $\mathcal{F}$ by
$$ L_{t}= \exp\left( \int_{0}^{t}h(X_{s})dQ_{s}-\frac{1}{2}\int_{0}^{t}h^{2}(X_{s})ds \right), \quad t \in \mathbb{R}_{+}. $$
For a fixed maturity $ T> 0 $, the process $( L_{t\wedge T}, t\in \mathbb{R}_{+} ) $ is a uniformly integrable 
$ (\mathbb{P}^{\circ}, \mathcal{F} )-$martingale.
 \begin{definition}
  The  probability $ \mathbb{P}, $ called observation probability, is defined as follow 
  $$ \frac{d \mathbb{P}}{d \mathbb{P}^{\circ}}\mid_{\mathcal{F}_{T}}= L_{T}. $$
 \end{definition}
 The probability measures $ \mathbb{P} $ and $\mathbb{P}^{\circ} $ are equivalent. Then using Girsanov theorem, 
  the process $(W, B ) $ is a $\mathbb{R}^{2}-$ valued $ (\mathbb{P},\mathcal{F})-$ Brownian motion.
 The signal $ X $ and the observation
 $ Q $ are represented under $\mathbb{P} $ by
 \begin{eqnarray*}
    X_{t} & = & mt +W_{t} +\sum_{i=1}^{N_{t}}Y_{i}, \\
    Q_{t} & = & \int_{0}^{t}h(X_{s})ds + B_{t}, t\in \mathbb{R}.
 \end{eqnarray*}
 
Then, under $\mathbb{P},$   the process $X$ has stationary and independent increments. It stay a Markov process .
We also note that the law of $X$, so $\tau_x$ under $\mathbb{P}^{0}$ is the same as under $\mathbb{P}.$

\section{Existence of the conditional density}
We introduce the following filtrations:
\begin{align*}
\displaystyle \mathcal{D}_{t} &=\displaystyle \sigma({\bf 1}_{\tau_{x}\leq u}, u\leq t) ,\\
 \displaystyle\mathcal{F}_{t}^{Q} &=\displaystyle \sigma(Q_{u}, u \leq t),\\
 \displaystyle\mathcal{G}_{t} & = \displaystyle \mathcal{F}_{t}^{Q} \vee  \mathcal{D}_{t},~~ \forall t \geq 0.
\end{align*}
The filtration $ (\mathcal{G}_{t})_{t\geq 0}$ models information available on both firm and default occurence for 
investors at time t. It models all available information.
These filtrations are supposed c\`ad and complete.
\begin{proposition} \label{propo-exist-density}
 For all $ t>0,$ on the set $\{\tau_{x}>t\},$ the
 $\mathcal{G}_{t}$ conditional   law of $\tau_{x}$ has the following form
 \begin{equation}
  \bar{f}(r,t,x)dr+ \mathbb{P}(\tau_x =\infty |\mathcal{G}_t)\delta_\infty (dr) 
 \mbox{ and } \mathbb{P}(\tau_x =\infty |\mathcal{G}_t)= 
 {\bf 1}_{\tau_x>t}\mathbb{E}(G(\infty, x-X_t)|\mathcal{G}_t ), 
 \end{equation}
where
\begin{equation*}
 \bar{f}(r,t,x) :=\mathbb{E}[f(r-t,x-X_{t})|\mathcal{G}_{t}].
\end{equation*}
\end{proposition}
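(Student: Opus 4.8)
The plan is to exploit the Markov property of $X$ under $\mathbb{P}$ together with the key fact, already recorded in the excerpt, that the $\mathbb{P}$-law of $X$ (hence of $\tau_x$) coincides with its $\mathbb{P}^\circ$-law, so that the unconditional density $f(\cdot,x)$ from \eqref{density} is available as a building block. First I would fix $t>0$ and work on the event $\{\tau_x>t\}$, which is $\mathcal{D}_t$-measurable, hence $\mathcal{G}_t$-measurable. On this event, $\{\tau_x>r\}=\{\tau_x>t\}\cap\{X_s<x,\ t<s\le r\}$ for $r>t$, and the latter event depends only on the post-$t$ increments of $X$, i.e. on the process $X^{(t)}_s:=X_{t+s}-X_t$, which is independent of $\mathcal{F}_t$ and has the same law as $X$. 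Writing $x'=x-X_t$, the first hitting time of level $x'$ by $X^{(t)}$ is, conditionally on $\mathcal{F}_t\supseteq\mathcal{G}_t$ restricted to the survival event, distributed with density $f(\cdot,x')$ and atom at infinity $\mathbb{P}(\tilde\tau\text{-type event})$; here one must use that $G(\infty,x')=\mathbb{P}(\text{level }x'\text{ never reached})$ is exactly the infinite-time mass associated with $f(\cdot,x')$.

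The core computation is then: for a bounded test function $\varphi$ on $\bar{\mathbb{R}}_+$ and any bounded $\mathcal{G}_t$-measurable $Z$,
\begin{equation*}
\mathbb{E}\bigl[{\bf 1}_{\tau_x>t}\,\varphi(\tau_x)\,Z\bigr]
=\mathbb{E}\Bigl[{\bf 1}_{\tau_x>t}\,Z\;\Psi\bigl(x-X_t\bigr)\Bigr],
\quad
\Psi(x'):=\int_t^\infty \varphi(r)\,f(r-t,x')\,dr+\varphi(\infty)\,G(\infty,x'),
\end{equation*}
obtained by conditioning on $\mathcal{F}_t$ and using independence of the increments after $t$ from $\mathcal{F}_t$ (note $Z$ and ${\bf 1}_{\tau_x>t}$ are $\mathcal{F}_t$-measurable once we are on $\{\tau_x>t\}$ — this is the point where the inclusion $\mathcal{G}_t\subseteq\mathcal{F}_t$, valid because $\mathcal{D}_t$ and $\mathcal{F}^Q_t$ are both generated by functionals of $X$ and $Q$, matters). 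Then pull the conditional expectation inside: $\mathbb{E}[{\bf 1}_{\tau_x>t}Z\,\Psi(x-X_t)]=\mathbb{E}[Z\,\mathbb{E}({\bf 1}_{\tau_x>t}\Psi(x-X_t)\mid\mathcal{G}_t)]$. Substituting the definition of $\Psi$ and identifying coefficients of $\varphi(r)$ and $\varphi(\infty)$ gives the density $\bar f(r,t,x)=\mathbb{E}[f(r-t,x-X_t)\mid\mathcal{G}_t]$ on $\{\tau_x>t\}$ (the indicator ${\bf 1}_{\tau_x>t}$ being already there) and the atom $\mathbb{P}(\tau_x=\infty\mid\mathcal{G}_t)={\bf 1}_{\tau_x>t}\mathbb{E}[G(\infty,x-X_t)\mid\mathcal{G}_t]$. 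A Fubini step is needed to move $\mathbb{E}[\cdot\mid\mathcal{G}_t]$ through $\int_t^\infty\varphi(r)f(r-t,\cdot)\,dr$; this is licit because $f$ is nonnegative and integrable (it is a sub-probability density) and $\varphi$ is bounded.

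The main obstacle is the bookkeeping around the two $\sigma$-algebras: one must justify carefully that conditioning on $\mathcal{G}_t$ can be replaced, on the survival event and for the purpose of computing the law of $\tau_x$, by conditioning on the larger $\mathcal{F}_t$, and that the post-$t$ hitting problem is genuinely independent of $\mathcal{G}_t$. Concretely I would argue that on $\{\tau_x>t\}$ the pair $\bigl({\bf 1}_{\tau_x\le u},u\le t\bigr)$ is a deterministic functional of $(X_s)_{s\le t}$, hence $\mathcal{G}_t\subseteq\sigma(X_s,Q_s:s\le t)\vee\mathcal{N}$, and that $\tau_x^{(t)}:=\inf\{s>0:X_{t+s}-X_t\ge x-X_t\}$ — wait, more precisely the event $\{\tau_x>r,\tau_x>t\}$ equals $\{\tau_x>t\}\cap\{\sup_{t\le s\le r}(X_s-X_t)<x-X_t\}$ — depends only on $X_t$ and the independent increment process $(X_{t+s}-X_t)_{s\ge0}$. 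The stationarity and independence of increments of $X$ under $\mathbb{P}$ (stated in the excerpt) then delivers the conditional law of $\tau_x$ given $X_t$ on $\{\tau_x>t\}$ as exactly $f(r-t,x-X_t)\,dr+G(\infty,x-X_t)\delta_\infty$, and a final tower-property step projects further onto $\mathcal{G}_t$. I would relegate the measure-theoretic details of the $\mathcal{G}_t$-versus-$\mathcal{F}_t$ reduction and the Fubini justification to the Appendix, as the excerpt promises.
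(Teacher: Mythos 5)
Your proposal is correct and follows the same overall route as the paper: both arguments reduce the problem, via the $(\mathbb{P},\mathcal{F})$-Markov property and the identity $\tau_x=t+\tau_{x-X_t}\circ\theta_t$ on $\{\tau_x>t\}$, to the known (possibly defective) $\mathcal{F}_t$-conditional density $f(\cdot-t,x-X_t)$ of Coutin--Dorobantu, then project from $\mathcal{F}_t$ onto $\mathcal{G}_t\subseteq\mathcal{F}_t$ by the tower property and interchange $\mathbb{E}[\,\cdot\,|\mathcal{G}_t]$ with the $dr$-integral; the atom at infinity is handled identically in both. Where you differ is in the justification of that interchange. The paper invokes the pointwise bound of Lemma \ref{lemme0} to secure integrability on $[a,b]$ with $a>t$, upgrades the resulting ``for each $b$, a.s.'' identity to ``a.s., for all $b$'' by viewing both sides as increasing processes with continuous expectations and applying the modification theorem of Revuz--Yor, and finally lets $a=t+\frac{1}{n}\downarrow t$ by monotone convergence. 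You instead appeal to conditional Tonelli for the nonnegative integrand $\varphi(r)f(r-t,x-X_t)$, which requires no integrability estimate at all (indeed $\int_t^\infty f(r-t,x')\,dr\le 1$ since $f(\cdot,x')$ is a sub-probability density), so your argument dispenses with Lemma \ref{lemme0} and with the limiting step $a\downarrow t$ altogether --- a genuine simplification. The one point you gloss over, and which is exactly what the paper's Revuz--Yor detour is buying, is the existence of a single jointly measurable version $(r,\omega)\mapsto\bar f(r,t,x)$ valid simultaneously for all test functions (equivalently, outside a single null set for all intervals at once): ``identifying coefficients of $\varphi(r)$'' needs either a countable determining class of $\varphi$'s or the measurable-version form of conditional Fubini--Tonelli. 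That is standard and fixable in a line, so it is a presentational gap rather than a mathematical one.
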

\begin{rems}
\label{rk2}
 
We introduce the function $G$ defined as
$G(t,x)=\mathbb{P}(\tau_{x}>t)=\mathbb{P}^o(\tau_{x}>t).$
Refering to \cite{volpi}, for all $x >0$, the passage time $\tau_x $ is finite almost surely if and only if
 $ m+ \mathbb{E}(Y_{1}) \geq 0.$

\end{rems}
\noindent
The next lemma is an auxiliary result. It allows to bound up the density function of the hitting time and later to
interchange the integral and the expectation in the term 
$ \mathbb{E}\left[{\bf 1}_{\tau_{x}>t}\int_{t}^{b}f(r-t,x-X_{t})dr |\mathcal{G}_{t}\right].$

\begin{lemme}\label{lemme0}
 There exists some constants $\tilde{C}$ and $ C $ such that $ \forall t>0, ~~ x>0,$
 \begin{equation}\label{equatlemme0}
  \displaystyle\tilde{f}(t,x) \leq \tilde{C}(\frac{1}{t}+ \frac{1}{\sqrt{t}}) \mbox{ and }
  \displaystyle f(t,x)\leq C(1+  \displaystyle\frac{1}{\sqrt{t}}+\frac{1}{t^{\frac{3}{2}}}).
 \end{equation}
\end{lemme}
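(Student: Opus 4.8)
\quad
The two inequalities are established in turn, the bound on $f$ using the one on $\tilde f$. For $\tilde f$ the argument is purely elementary: starting from $\tilde f(t,x)=\frac{x}{\sqrt{2\pi t^{3}}}\exp\big[-\frac{(x-mt)^{2}}{2t}\big]$ (here $x>0$, so $|x|=x$), I would apply $x\le |x-mt|+|m|\,t$ and split the right-hand side into two terms. On the first I use the elementary maximisation $\sup_{v\in\mathbb{R}}|v|\,e^{-v^{2}/(2t)}=\sqrt{t/e}$ (differentiate $v\mapsto v\,e^{-v^{2}/(2t)}$), which produces a term of order $\frac1t$; on the second I just bound the exponential by $1$, which produces a term of order $\frac1{\sqrt t}$. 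This gives $\tilde f(t,x)\le \frac1{\sqrt{2\pi e}}\,\frac1t+\frac{|m|}{\sqrt{2\pi}}\,\frac1{\sqrt t}$ for every $x>0$, hence the first bound with $\tilde C=\max\big(\frac1{\sqrt{2\pi e}},\frac{|m|}{\sqrt{2\pi}}\big)$.

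For $f$ I start from the explicit expression $(\ref{density})$. The first summand is at most $\lambda$ because $1-F_{Y}\le 1$. In the second summand, $\mathbb{E}\big[{\bf 1}_{\tau_{x}>T_{N_{t}}}\tilde f(t-T_{N_{t}},x-X_{T_{N_{t}}})\big]$, I split according to $\{N_{t}=0\}$ and $\{N_{t}\ge 1\}$. On $\{N_{t}=0\}$ one has $T_{N_{t}}=0$, $X_{0}=0$ and $\tau_{x}>0$ a.s., so this part equals $e^{-\lambda t}\tilde f(t,x)$, which is already $\le\tilde C(\frac1t+\frac1{\sqrt t})$. On $\{N_{t}\ge1\}$ the naive move---dominating $\tilde f$ by $\tilde C\big(\frac1{t-T_{N_{t}}}+\frac1{\sqrt{t-T_{N_{t}}}}\big)$ and ${\bf 1}_{\tau_{x}>T_{N_{t}}}$ by $1$---is hopeless, since $t-T_{N_{t}}$ is the backward recurrence time of the Poisson process and $\mathbb{E}\big[(t-T_{N_{t}})^{-1}\big]=+\infty$. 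Instead I keep the inclusion $\{\tau_{x}>T_{N_{t}}\}\subseteq\{X_{T_{N_{t}}}<x\}$ and exploit the Brownian smoothing before the last jump, conditioning on $\{N_{t}=n,\,T_{n}=s\}$ for $n\ge1$: the conditional density of the last jump time $T_{n}=T_{N_{t}}$ given $N_{t}=n$ is the density of the maximum of $n$ i.i.d.\ uniforms on $(0,t)$, i.e.\ $ns^{n-1}/t^{n}$, and under this conditioning $X_{s}=ms+W_{s}+\sum_{i=1}^{n}Y_{i}$ with $W_{s}$ a centred Gaussian of variance $s$ independent of the jump part, so $X_{s}$ has a density on $\mathbb{R}$ bounded by $(2\pi s)^{-1/2}$. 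Bounding ${\bf 1}_{\tau_{x}>T_{N_{t}}}\le{\bf 1}_{X_{s}<x}$ and integrating out $X_{s}$ yields
\[ \mathbb{E}\big[{\bf 1}_{X_{s}<x}\,\tilde f(t-s,x-X_{s})\,\big|\,N_{t}=n,\,T_{n}=s\big]\ \le\ \frac{1}{\sqrt{2\pi s}}\int_{0}^{\infty}\tilde f(t-s,v)\,dv\ \le\ \frac{1}{\sqrt{2\pi s}}\Big(|m|+\frac{1}{\sqrt{2\pi (t-s)}}\Big), \]
the last step being the elementary identity $\int_{0}^{\infty}\tilde f(a,v)\,dv=m\,\mathbb{P}(ma+\sqrt a\,Z>0)+a^{-1/2}\,\mathbb{E}\big[Z\,{\bf 1}_{\{ma+\sqrt a\,Z>0\}}\big]$ with $Z$ standard normal.

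The last bound does not depend on $n$, and $\sum_{n\ge1}\mathbb{P}(N_{t}=n)\,ns^{n-1}/t^{n}=\lambda e^{-\lambda(t-s)}$, so the $\{N_{t}\ge1\}$ contribution is at most
\[ \int_{0}^{t}\lambda e^{-\lambda(t-s)}\,\frac{1}{\sqrt{2\pi s}}\Big(|m|+\frac{1}{\sqrt{2\pi (t-s)}}\Big)ds\ \le\ \frac{|m|\lambda}{\sqrt{2\pi}}\int_{0}^{t}\frac{e^{-\lambda(t-s)}}{\sqrt s}\,ds+\frac{\lambda}{2\pi}\int_{0}^{t}\frac{ds}{\sqrt{s(t-s)}}. \]
Here $\int_{0}^{t}\frac{ds}{\sqrt{s(t-s)}}=\pi$, and $\int_{0}^{t}\frac{e^{-\lambda(t-s)}}{\sqrt s}\,ds$ is bounded uniformly in $t>0$ (split at $s=\min(1,t)$: on $(0,\min(1,t))$ use $e^{-\lambda(t-s)}\le1$ and $\int_{0}^{1}s^{-1/2}ds=2$, on the remaining interval use $s^{-1/2}\le1$ and $\int e^{-\lambda(t-s)}ds\le\lambda^{-1}$). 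Hence the $\{N_{t}\ge1\}$ part is at most a constant depending only on $m$ and $\lambda$.

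Collecting the three contributions, $f(t,x)\le \lambda+\mathrm{const}+\tilde C\big(\frac1t+\frac1{\sqrt t}\big)$ for all $t>0$, $x>0$. Since $\frac1t\le 1+\frac1{t^{3/2}}$ for every $t>0$ (compare $t\le1$ and $t\ge1$), the right-hand side is $\le C\big(1+\frac1{\sqrt t}+\frac1{t^{3/2}}\big)$ for a suitable $C$, which is the second bound. I expect the only genuinely delicate step to be the $\{N_{t}\ge1\}$ estimate: one must avoid dominating $\tilde f$ pointwise and instead trade the non-integrable singularity $(t-s)^{-1}$ for the integrable pair $(t-s)^{-1/2}$ and $s^{-1/2}$, the factor $s^{-1/2}$ being produced for free by the Gaussian density of $X_{s}$ and being integrable against the law of the last jump time $T_{N_{t}}$.
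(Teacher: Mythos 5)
Your proof is correct. The bound on $\tilde f$ is obtained exactly as in the paper (write $x=(x-mt)+mt$, maximise $u\mapsto u e^{-u^{2}/2}$ on the first piece, bound the exponential by $1$ on the second), so there is nothing to compare there. For the bound on $f$ you take a genuinely different route in the only delicate step, the term $\mathbb{E}\big[{\bf 1}_{\tau_{x}>T_{N_{t}}}\tilde f(t-T_{N_{t}},x-X_{T_{N_{t}}})\big]$. Both arguments rest on the same idea --- the Brownian increment accumulated up to the last jump time regularises the $(t-T_{N_{t}})^{-3/2}$ singularity of $\tilde f$ --- but the paper implements it through two appendix lemmas: an explicit Gaussian computation (Lemma \ref{lemme5}) whose whole point is the cancellation $\sigma^{2}+(t-T_{N_{t}})=T_{N_{t}}+(t-T_{N_{t}})=t$, which converts the singularity into $t^{-3/2}$ plus a cross term $\frac{C_{3}}{t}\sqrt{T_{N_{t}}/(t-T_{N_{t}})}$, the latter handled by the estimate $\mathbb{E}\big(\sqrt{T_{N_{t}}/(t-T_{N_{t}})}\big)\le 2\lambda t$ of Lemma \ref{lemme6}. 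You instead bound the conditional density of $X_{T_{N_{t}}}$ by $(2\pi s)^{-1/2}$, integrate $\tilde f(t-s,\cdot)$ in the space variable via the exact identity for $\int_{0}^{\infty}\tilde f(a,v)\,dv$, and then integrate against the explicit law $\lambda e^{-\lambda(t-s)}ds$ of $T_{N_{t}}$ on $\{N_{t}\ge1\}$, the singular product $s^{-1/2}(t-s)^{-1/2}$ being killed by the Beta integral $\int_{0}^{t}\frac{ds}{\sqrt{s(t-s)}}=\pi$; the price is that $\{N_{t}=0\}$ must be treated separately (which you do correctly, falling back on the deterministic bound for $\tilde f$). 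Your version is entirely self-contained --- it needs neither Lemma \ref{lemme5} nor Lemma \ref{lemme6} --- and in fact yields the slightly sharper estimate $f(t,x)\le C(1+t^{-1/2}+t^{-1})$, from which the stated inequality follows since $t^{-1}\le 1+t^{-3/2}$; the paper's computation, by contrast, produces the $t^{-3/2}$ term directly. All the intermediate claims you flag (the density $ns^{n-1}/t^{n}$ of $T_{n}$ given $N_{t}=n$, the resummation to $\lambda e^{-\lambda(t-s)}$, the uniform bound on $\int_{0}^{t}e^{-\lambda(t-s)}s^{-1/2}ds$) check out.
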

\begin{proof}
On one hand, we have $ \forall t >0$:
 \begin{align*}
\displaystyle  \tilde{f}(t,x)&=\displaystyle \frac{x}{\sqrt{2\pi t^{3}}}\exp[-\frac{(x-mt)^{2}}{2t}]\\
                &=\displaystyle\frac{x-mt}{\sqrt{2\pi t^{3}}}\exp[-\frac{(x-mt)^{2}}{2t}]+\frac{mt}{\sqrt{2\pi t^{3}}}
                \exp[-\frac{(x-mt)^{2}}{2t}]\\
                &\displaystyle\leq \left[ \frac{|x-mt|}{\sqrt{2\pi t^{3}}}+\frac{|m|}{\sqrt{2\pi t}}
                \right]\exp[-\frac{(x-mt)^{2}}{2t}]
 \end{align*}
  Let $C_0 = \sup_{u\in \mathbb{R}}ue^{-\frac{u^{2}}{2}}$ with $ u = \frac{|x-mt|}{\sqrt{t}}.$
  It follows that
 \begin{equation*}
  \displaystyle \tilde{f}(t,x) \leq \displaystyle\frac{C_0}{t\sqrt{2 \pi}} + \frac{|m|}{\sqrt{2 \pi t}},~~ t\in \mathbb{R}_{+},
  x \in \mathbb{R}_{+}.
 \end{equation*} 
 The first part of result come from taking  $\tilde{C}= \frac{1}{\sqrt{2 \pi}}(C_0 + |m|)$.\\
On another hand, according to \cite{Coutin-Dorobantu},  the function $f$ defined in (\ref{density}) satisfies
\begin{equation*}
 f(t,x)\leq \lambda + \mathbb{E}(1_{\tau_{x} > T_{N_{t}}} \tilde{f}(t-T_{N_{t}},x-X_{T_{N_{t}}})), \forall t>0,
\end{equation*}
where
\begin{equation*}
 \mathbb{E}(1_{\tau_{x} > T_{N_{t}}} \tilde{f}(t-T_{N_{t}},x-X_{T_{N_{t}}}))\leq 
 \mathbb{E}\left( {\bf 1}_{\{X_{T_{N_t}}>0\}}\tilde{f}(t-T_{N_{t}},
 X_{T_{N_t}})\right),
\end{equation*}
with $ X_{T_{N_t}}= x-mT_{N_{t}}-\!\!\sum_{i=1}^{N_{t}}Y_{i}-\sqrt{T_{N_{t}}}B_{1}.$ 

and $B_{1}$ is a Gaussian random variable independent of $N$, $Y_{i}, i\in \mathbb{N}^{*}. $ According to Lemma 3.1 of the 
appendix of \cite{Coutin-Dorobantu}, we obtain 
\begin{align*}
 f(t,x) &\leq \lambda + \mathbb{E}\left(\!\!{\bf 1}_{X_{T_{N_t}}>0}
 \frac{|X_{T_{N_t}}|}{\sqrt{2\pi(t-T_{N_{t}})^{3}}}
 \exp\left[\!-\frac{(X_{T_{N_t}})^{2}}{2(t-T_{N_{t}})}\right]\right)\\
 \leq &\lambda+\mathbb{E}\left(
 \frac{[X_{T_{N_t}}]_{+}}{\sqrt{2\pi(t-T_{N_{t}})^{3}}}
 \exp\left[-\frac{(X_{T_{N_t}})^{2}}{2(t-T_{N_{t}})}\right]\right).\\
\end{align*}
Applying Lemma  \ref{lemme5} in Appendix to  $t= t-T_{N_{t}}, \sigma=\sqrt{T_{N_{t}}},$ 
 $G=B$ and $\mu=x-mt-\sum Y_i$ 
it follows that
\begin{equation*}
  f(t,x)\leq \lambda + \frac{C_1}{t^{\frac{3}{2}}} +\frac{C_2}{t^{\frac{1}{2}}} +
  \frac{C_3}{t} \mathbb{E}\left(\sqrt{\frac{T_{N_{t}}}{t-T_{N_{t}}}}\right).
\end{equation*}
We take $C= \max\{C_1, C_2, C_3+\lambda \}$ and the proof is completed with Lemma \ref{lemme6} of the Appendix.
\end{proof}
Now, we prove  proposition \ref{propo-exist-density}.
\begin{proof} of proposition \ref{propo-exist-density}:\\

 First note that, since $X$ is a $(\mathcal{F}, \mathbb{P})-$ Markovian process, we have
 \begin{align*}
  \mathbb{E}({\bf 1}_{\tau_{x}=\infty}|\mathcal{G}_{t})&= \mathbb{E}\left(\mathbb{E}({\bf 1}_{\tau_x =\infty}|\mathcal{F}_t)|
  \mathcal{G}_t\right)\\
  &= \mathbb{E}[{\bf 1}_{\tau_x >t}\mathbb{E}^{t}({\bf 1}_{\tau_{x-X_t} =\infty})|
  \mathcal{G}_t]\\
  &= {\bf 1}_{\tau_{x}>t} \mathbb{E}(G(\infty,x-X_t)|\mathcal{G}_t), \mbox{ where }
  \mathbb{E}^{t}(.)=\mathbb{E}(.|\mathcal{F}_t).
  \end{align*}
 The fact that $\tau_x $ is a $ (\mathcal{G}, \mathbb{P})-$ stopping time justifies the last equality.
 
For all $b \geq t$ the $(\mathbb{P},{\cal F})$ Markov property
 of the process $X$  and the fact that on the set $\{\tau_x>t\}$~:  $\tau_x=t+\tau_{x-X_t}\circ\theta_t$  ensure
\begin{align*}
\mathbb{E}({\bf 1}_{a\leq \tau_{x}<b}|\mathcal{G}_{t}) &=\mathbb{E}\left(\mathbb{E}(
{\bf 1}_{a\leq \tau_{x}<b}|\mathcal{F}_{t})|\mathcal{G}_t\right)\\
&= \mathbb{E}\left({\bf 1}_{\tau_x>t}\mathbb{E}^{t}(
{\bf 1}_{a-t\leq \tau_{x-X_t}<b-t})|\mathcal{G}_t\right).
\end{align*}
The $\mathcal{F}_t$- conditional law of $\tau_{x-X_t}$ has the density (possibly defective) $ f(.-t,x-X_t) $, thus 
\begin{equation*}
\mathbb{E}({\bf 1}_{a\leq \tau_{x}<b}|\mathcal{G}_{t})
= \mathbb{E}\left[{\bf 1}_{\tau_{x}>t}\int_{a}^{b}f(r-t,x-X_{t})dr |\mathcal{G}_{t}\right].
  \end{equation*}
By hypothesis, we have $r-t\geq a-t>0.$ It follows from lemma \ref{lemme0} that

\begin{equation*}
 \mathbb{E}\left[{\bf 1}_{\tau_{x}>t}\int_{a}^{b}f(r-t,x-X_{t})dr \right]< \infty.
\end{equation*}
Then, we have forall $b \geq t,$
\begin{equation*}
 \mathbb{E}\left[{\bf 1}_{\tau_{x}>t}\int_{a}^{b}f(r-t,x-X_{t})dr |\mathcal{G}_{t}\right]  =
 \int_{a}^{b}\mathbb{E}\left[{\bf 1}_{\tau_{x}>t}f(r-t,x-X_{t}) |\mathcal{G}_{t}\right]dr ~~ a.s.
\end{equation*}

Now, we show the equality $a.s$ forall $ b\geq t.$ Let $M_1$ and $M_2$ be  the processes defined by 
\begin{equation*}
 M_{1}:b\longmapsto \mathbb{E}\left[{\bf 1}_{\tau_{x}>t}\int_{a}^{b}f(r-t,x-X_{t})dr |\mathcal{G}_{t}\right] \mbox{ and }
 M_{2}:b\longmapsto  \int_{a}^{b}\mathbb{E}\left[{\bf 1}_{\tau_{x}>t}f(r-t,x-X_{t}) |\mathcal{G}_{t}\right]dr.
\end{equation*}
These processes are increasing, then they are 
submartingales with respect to the filtration $\mathcal{\tilde{G}}_{b}=\mathcal{G}_{t}~~ \forall b\geq t.$
Note that  $b\longmapsto \mathbb{E}(M_{1}(b))$ and $b\longmapsto\mathbb{E}(M_{2}(b))$ are too continuous.
Using  Revuz-Yor  Theorem 2.9 p. 61 \cite{revuzyor}, they have same c\`ad-l\`ag modification for all b, meaning that
\begin{equation*}
 \mathbb{E}\left[{\bf 1}_{\tau_{x}>t}\int_{a}^{b}f(r-t,x-X_{t})dr |\mathcal{G}_{t}\right]=
 \int_{a}^{b}\mathbb{E}\left[{\bf 1}_{\tau_{x}>t}f(r-t,x-X_{t}) |\mathcal{G}_{t}\right]dr ~~~~a.s. \forall b.
\end{equation*}
We conclude that, almost surely, for all $ b> a > t,$
 \begin{equation*}
  {\bf 1}_{\tau_{x}>t}\mathbb{E}({\bf 1}_{a<\tau_{x}\leq b}|\mathcal{G}_{t})= {\bf 1}_{\tau_{x}>t}
  \int_{a}^{b}\mathbb{E}\left[{\bf 1}_{\tau_{x}>t}f(r-t,x-X_{t})|\mathcal{G}_{t} \right]dr.
 \end{equation*}
 
Taking $ a =t + \frac{1}{n}$, letting $n$ going to infinity  and using
monotone Lebesgue Theorem yield that,
 $\mathbb{P}-a.s~~ \forall ~~ b, $
\begin{equation*}
 \mathbb{E}({\bf 1}_{t<\tau_{x}\leq b}|\mathcal{G}_{t}) = 
  \int_{t}^{b}\mathbb{E}\left[{\bf 1}_{\tau_{x}>t}f(r-t,x-X_{t})
 |\mathcal{G}_{t}\right]dr.
\end{equation*}
\end{proof}

 \section{Mixed filtering-Integro-differential equation for
\\
 conditional density}
In this section, we give one of our main results.  Indeed, we show that the conditional law of the hitting time $\tau_x $
given the filtration $(\mathcal{G}_{t})_{t\geq 0} $ satisfies some stochastic integrodifferential equation.
\begin{theo}
\label{th1}
 Let $t>0$ be a real number. For any $r>t,$ on the set $\{ \tau_{x}>t\},$ the conditional
density of $\tau_{x}$ given $\mathcal{G}_{t}$ satisfies the stochastic  integrodifferential equation
\begin{align*} \displaystyle
 \bar{f}(r,t,x)  &= \displaystyle \frac {f(r,x)}{\mathbb{P}(\tau_{x}>t)} 
 + \displaystyle\int_{0}^{t}\Pi^{1}(h)(t,r,u) dQ_{u}\\
   \displaystyle   
  & -\displaystyle\int_{0}^{t} \frac{\bar{f}(r,u,x)}{\mathbb{E}({\bf 1}_{\tau_x >u}G(t-u,x-X_u)|\mathcal{G}_u)}
   \Pi(t,u)(h)dQ_{u}\\ 
   \displaystyle
   &+\displaystyle\int_{0}^{t} \frac{\bar{f}(r,u,x)}{\mathbb{E}({\bf 1}_{\tau_x >u}G(t-u,x-X_u)|\mathcal{G}_u)} 
   [\Pi(t,u)(h)]^{2}du\\ 
   \displaystyle
   &-\displaystyle\int_{0}^{t}\Pi^{1}(h)(t,r,u) \Pi(h)(t,u)du .
  \end{align*}
  where
  \begin{align*}
   \Pi^{1} (t,r,u)(\Phi)&= \frac{\mathbb{E}({\bf 1}_{\tau_{x}>u}\Phi(X_{u})f(r-u,x-X_{u})|\mathcal{G}_{u})}
   {\mathbb{E}({\bf 1}_{\tau_{x}>u}G(t-u,x-X_{u})|\mathcal{G}_{u})},\\
   \Pi(t,u)(\Phi)& = \frac{\mathbb{E}({\bf 1}_{\tau_{x}>u}\Phi(X_{u})G(t-u,x-X_{u})|
   \mathcal{G}_{u})} {[\mathbb{E}({\bf 1}_{\tau_{x}>u}G(t-u,x-X_{u})|\mathcal{G}_{u})]^{2}}
  \end{align*}

and $G$ is defined in Remark\ref{rk2}
\end{theo}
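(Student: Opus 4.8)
\noindent The plan is to realise $\bar f(r,t,x)$, on $\{\tau_x>t\}$, as the terminal value of a ratio of two explicit $(\mathbb P,\mathcal G)$--martingales and then to apply It\^o's formula to that ratio. Fix $t>0$, $r>t$, and for $u\in[0,t]$ set
\[
A_u:=\mathbb E\!\left[{\bf 1}_{\tau_x>u}\,f(r-u,x-X_u)\,\big|\,\mathcal G_u\right],\qquad
B_u:=\mathbb E\!\left[{\bf 1}_{\tau_x>u}\,G(t-u,x-X_u)\,\big|\,\mathcal G_u\right]
\]
(here $B$ denotes this martingale, not the observation Brownian motion). Since under $\mathbb P$ the process $X$ has stationary and independent increments, one has ${\bf 1}_{\tau_x>u}G(t-u,x-X_u)=\mathbb E[{\bf 1}_{\tau_x>t}\,|\,\mathcal F_u]$, and, exactly as in the proof of Proposition~\ref{propo-exist-density}, ${\bf 1}_{\tau_x>u}f(r-u,x-X_u)$ is a version of the $\mathcal F_u$--conditional density of $\tau_x$ at $r$; the same interchange of expectation and $dr$--integration as there, together with Lemma~\ref{lemme0} and Theorem~2.9 of \cite{revuzyor}, shows that both processes are genuine $(\mathbb P,\mathcal F)$--martingales, hence (as $\mathcal G_u\subset\mathcal F_u$) their optional $\mathcal G$--projections $A$ and $B$ are $(\mathbb P,\mathcal G)$--martingales. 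By construction $A_0=f(r,x)$, $B_0=G(t,x)=\mathbb P(\tau_x>t)$, $A_t={\bf 1}_{\tau_x>t}\,\bar f(r,t,x)$ and $B_t={\bf 1}_{\tau_x>t}$, so with $R_u:=A_u/B_u$ one gets $R_0=f(r,x)/\mathbb P(\tau_x>t)$ and $R_t=\bar f(r,t,x)$ on $\{\tau_x>t\}$: it suffices to expand $R$ on $[0,t]$.

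\noindent Next I would write the filtering equations for $A$ and $B$. In $\mathcal G=\mathcal F^Q\vee\mathcal D$ a square--integrable $(\mathbb P,\mathcal G)$--martingale decomposes along the innovation Brownian motion $\nu_u=Q_u-\int_0^u\mathbb E[h(X_s)\,|\,\mathcal G_s]\,ds$ and the compensated counting martingale $H_u-\Lambda_u$ of ${\bf 1}_{\tau_x\le\,\cdot\,}$, whose $\mathcal G$--compensator $\Lambda$ is absolutely continuous (its density is the intensity of \cite{Dorobantu}); this is the Fujisaki--Kallianpur--Kunita representation in the set--up of \cite{Pardoux,Coutin}. The $\mathcal F$--martingales above are driven only by the signal noise $W$ and the Poisson measure $M$, which are orthogonal to the observation Brownian motion under $\mathbb P$, so no correlation drift arises and
\[
dA_u=\bigl(\mathcal A^h_u-A_u\,\widehat h_u\bigr)\,d\nu_u+\widehat a_u\,d(H_u-\Lambda_u),\qquad
dB_u=\bigl(\mathcal B^h_u-B_u\,\widehat h_u\bigr)\,d\nu_u+\widehat b_u\,d(H_u-\Lambda_u),
\]
where $\widehat h_u=\mathbb E[h(X_u)\,|\,\mathcal G_u]$, $\mathcal A^h_u=\mathbb E[{\bf 1}_{\tau_x>u}h(X_u)f(r-u,x-X_u)\,|\,\mathcal G_u]$, $\mathcal B^h_u=\mathbb E[{\bf 1}_{\tau_x>u}h(X_u)G(t-u,x-X_u)\,|\,\mathcal G_u]$; since $A$ and $B$ both jump to $0$ at $\tau_x$, the predictable jump sizes are $\widehat a_u=-A_{u^-}$ and $\widehat b_u=-B_{u^-}$, so $\widehat a_u=R_{u^-}\,\widehat b_u$.

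\noindent Finally I would apply It\^o's formula to the ratio $R_u=A_u/B_u$ on $\{\tau_x>t\}$. On that event every $u\le t$ lies strictly before $\tau_x$, so there $A$ and $B$ are continuous, $B$ is bounded away from $0$ on $[0,t]$, and $H$ acts only through $-d\Lambda_u$; the two resulting $d\Lambda_u$--drifts cancel precisely because $\widehat a_u=R_{u^-}\widehat b_u$. (Conceptually: passing to $\widehat{\mathbb P}:=\mathbb P(\,\cdot\mid\tau_x>t)$, whose $\mathcal G$--density is $B_u/B_0$, turns $R$ into a $(\widehat{\mathbb P},\mathcal G)$--martingale — recall $R_uB_u/B_0=A_u/B_0$ is a $\mathbb P$--martingale — and removes the jump of ${\bf 1}_{\tau_x\le\,\cdot\,}$ on $[0,t]$ altogether; this is also why the explicit $\widehat h_u$--terms drop out.) Using $[Q]_u=u$ under $\mathbb P$, the $dQ_u$--coefficient of $dR_u$ collects into $\Pi^1(h)(t,r,u)-\tfrac{\bar f(r,u,x)}{B_u}\,\Pi(t,u)(h)$, while the $du$--coefficient — coming from $d[A,B]_u$, $d[B]_u$ and the change of drift $d\nu_u=dQ_u-\widehat h_u\,du$ — collects into $\tfrac{\bar f(r,u,x)}{B_u}[\Pi(t,u)(h)]^2-\Pi^1(h)(t,r,u)\,\Pi(h)(t,u)$, with $\Pi^1$ and $\Pi$ as in the statement and $B_u=\mathbb E[{\bf 1}_{\tau_x>u}G(t-u,x-X_u)\,|\,\mathcal G_u]$. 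Integrating $R_t=R_0+\int_0^t dR_u$ over $[0,t]$ on $\{\tau_x>t\}$ then yields the claimed equation.

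\noindent The main obstacle is the second step: justifying the joint Brownian/jump innovation decomposition in $\mathcal G$ for the time--inhomogeneous, $\tau_x$--dependent functionals $A$ and $B$, checking that the orthogonality of the signal noise and the observation noise kills the correlation drift, and identifying the predictable jump magnitudes so that the hitting--time intensity cancels in the ratio. The integrability needed to treat the $\mathcal F$--conditional law of $\tau_x$ as a genuine martingale and to commute expectation with the $dr$--integral is exactly Lemma~\ref{lemme0}, used as in Proposition~\ref{propo-exist-density}.
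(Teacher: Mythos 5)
Your route is genuinely different from the paper's. You work under $\mathbb{P}$ with the normalized filters $A_u=\mathbb{E}[{\bf 1}_{\tau_x>u}f(r-u,x-X_u)\,|\,\mathcal{G}_u]$ and $B_u=\mathbb{E}[{\bf 1}_{\tau_x>t}\,|\,\mathcal{G}_u]$ and a Kushner--Stratonovich (innovations) representation; the paper works under the reference measure $\mathbb{P}^0$, reduces $\mathcal{G}_t$-conditioning to $\mathcal{F}_t^Q$-conditioning on $\{\tau_x>t\}$ via Lemma~\ref{lemme1}, derives Zakai-type equations for the \emph{unnormalized} quantities by testing against the total family $\mathcal{S}_t$ and using the independence of $X$ and $Q$ under $\mathbb{P}^0$ (Proposition~\ref{propo2} and its corollaries), and only then applies It\^o's formula to the ratio, with a detour through a denominator taken at $T>t$ and a limit $T\to t$ because $u\mapsto\mathbb{E}^0({\bf 1}_{\tau_x>u}L_u|\mathcal{F}_u^Q)$ is not known to be a semimartingale. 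Your formulation of the ratio, in which $B$ is a bounded $\mathcal{G}$-martingale for fixed $t$, elegantly sidesteps that particular difficulty, and your initial and terminal identifications $R_0=f(r,x)/\mathbb{P}(\tau_x>t)$ and $R_t=\bar f(r,t,x)$ on $\{\tau_x>t\}$ are correct.

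However, the step you defer --- the innovation SDEs for $A$ and $B$ --- is not a routine citation; it is where essentially all the analytic content of the theorem lives, and as written it is a genuine gap. The Fujisaki--Kallianpur--Kunita representation in \cite{Pardoux} or \cite{Coutin} concerns the observation filtration $\mathcal{F}^Q$ generated by a Brownian observation of the signal; here the observation filtration is $\mathcal{G}=\mathcal{F}^Q\vee\mathcal{D}$, which in addition records the default indicator of a path functional of the signal. To make your second step rigorous you must (i) establish that the $(\mathbb{P},\mathcal{G})$-compensator $\Lambda$ of ${\bf 1}_{\tau_x\le\cdot}$ is absolutely continuous and identify its intensity (this is where \cite{Dorobantu} would enter), (ii) prove a representation of the relevant $\mathcal{G}$-martingales along $\nu$ and $H-\Lambda$ --- i.e.\ solve the innovations problem for this mixed continuous/jump observation, or at least derive the SDEs for these particular $A$ and $B$ directly --- and (iii) justify that the predictable jump integrands equal $-A_{u^-}$ and $-B_{u^-}$ for $d\Lambda$-almost every $u$, not merely at $u=\tau_x$ where the jump is actually observed; only then does the $d\Lambda$-drift cancel in the ratio. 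None of this is supplied, and it is precisely the work that the paper's machinery is designed to avoid: Lemma~\ref{lemme1} eliminates $\mathcal{D}_t$ from the conditioning on the pre-default set, so the paper never needs the intensity of $\tau_x$ nor any martingale representation theorem in $\mathcal{G}$. The remaining ingredients of your plan (the martingale property of ${\bf 1}_{\tau_x>u}f(r-u,x-X_u)$ via Lemma~\ref{lemme0} and the interchange argument of Proposition~\ref{propo-exist-density}, the cancellation of the $\widehat h_u$ terms, the bookkeeping of the $du$-terms) are sound, so the proposal would become a proof once (i)--(iii) are carried out; at present the central assertion is assumed rather than proved.
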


The next lemma is inspired of Jeanblanc \cite{jeanblanc-rut} and Dorobantu \cite{Dorobantu}.
 \begin{lemme}
 \label{lemme1}
 For all $ t \in \mathbb{R}_{+}, $ for all $a$ and $b$ such that $t< a <b,$  for all $Y\in L^1({\cal F}_b,
 \mathbb{P})$
\begin{equation}
\label{equat1}
\mathbb{E}({\bf 1}_{\tau_{x}>t}|\mathcal{F}_{t}^{Q})> 0,~~ 
\mathbb{E}( Y{\bf 1}_{t<\tau_{x}}|\mathcal{G}_{t}) 
 = {\bf 1}_{\tau_{x}>t}
 \frac{\mathbb{E}^{0}(L_{b}Y{\bf 1}_{t<\tau_{x}}|\mathcal{F}_{t}^{Q})}
 {\mathbb{E}^{0}({\bf 1}_{\tau_{x}>t}L_{t}|\mathcal{F}_{t}^{Q})}. 
 \end{equation}
 For instance with $Y={\bf 1}_{a<\tau_{x}<b},$ we get
 $$
\mathbb{E}( {\bf 1}_{a<\tau_{x}<b}|\mathcal{G}_{t}) 
 = {\bf 1}_{\tau_{x}>t}
 \frac{\mathbb{E}^{0}(L_{b}{\bf 1}_{a<\tau_{x}<b}|\mathcal{F}_{t}^{Q})}
 {\mathbb{E}^{0}({\bf 1}_{\tau_{x}>t}L_{t}|\mathcal{F}_{t}^{Q})}.
$$
\end{lemme}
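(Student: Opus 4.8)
\textbf{Proof plan for Lemma \ref{lemme1}.}

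The plan is to establish the two assertions separately. For the strict positivity $\mathbb{E}({\bf 1}_{\tau_x>t}|\mathcal{F}^Q_t)>0$, I would argue by contradiction: if this conditional expectation vanished on a set of positive probability, then integrating against the indicator of that set would force $\mathbb{P}(\tau_x>t)$ to be strictly smaller than it is, using the fact that $\mathcal{F}^Q_t$ is generated by the observation $Q$, which is (under $\mathbb{P}^\circ$) an independent Brownian motion, hence independent of the signal $X$ and of $\tau_x$; under $\mathbb{P}$ the equivalence of measures preserves the positivity of the event $\{\tau_x>t\}$. More directly: since $\mathbb{P}(\tau_x>t)>0$ (this follows from the explicit density in (\ref{density}) and the fact that $G(t,x)=\mathbb{P}(\tau_x>t)>0$ for $t$ finite), and since the Radon--Nikodym density $L_t$ is strictly positive, the key point is that conditioning on $\mathcal{F}^Q_t$ cannot annihilate an event that has positive probability and is "not measurable" with respect to $\mathcal{F}^Q_t$ in a degenerate way; I would make this precise by the abstract Bayes formula below, noting that $\mathbb{E}^0({\bf 1}_{\tau_x>t}L_t|\mathcal{F}^Q_t)$ is an average of strictly positive quantities on $\{\tau_x>t\}$.

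For the main identity, the tool is the generalized Bayes formula (Kallianpur--Striebel) for the change of measure $d\mathbb{P}/d\mathbb{P}^\circ|_{\mathcal{F}_T}=L_T$, combined with the key-lemma technique for computing $\mathcal{G}_t$-conditional expectations of the form $\mathbb{E}(Z{\bf 1}_{\tau_x>t}|\mathcal{G}_t)$ where $\mathcal{G}_t=\mathcal{F}^Q_t\vee\mathcal{D}_t$ and $\mathcal{D}_t=\sigma({\bf 1}_{\tau_x\le u},u\le t)$. First I would recall the standard fact (as in Jeanblanc \cite{jeanblanc-rut}, Dorobantu \cite{Dorobantu}) that for a random variable $Y$ and the progressive enlargement by $\tau_x$, one has on $\{\tau_x>t\}$
\[
\mathbb{E}(Y{\bf 1}_{\tau_x>t}|\mathcal{G}_t)={\bf 1}_{\tau_x>t}\frac{\mathbb{E}(Y{\bf 1}_{\tau_x>t}|\mathcal{F}^Q_t)}{\mathbb{P}(\tau_x>t|\mathcal{F}^Q_t)},
\]
valid because on $\{\tau_x>t\}$ the $\sigma$-field $\mathcal{D}_t$ contributes only the information "$\tau_x>t$". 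Then I would apply the Bayes formula under $\mathbb{P}$: for $Y\in L^1(\mathcal{F}_b,\mathbb{P})$ with $b\ge t$, since $L$ is a $(\mathbb{P}^\circ,\mathcal{F})$-martingale and $L_t$ is $\mathcal{F}_t$-measurable,
\[
\mathbb{E}(Y{\bf 1}_{\tau_x>t}|\mathcal{F}^Q_t)=\frac{\mathbb{E}^0(L_b Y{\bf 1}_{\tau_x>t}|\mathcal{F}^Q_t)}{\mathbb{E}^0(L_b|\mathcal{F}^Q_t)}\quad\text{and}\quad \mathbb{P}(\tau_x>t|\mathcal{F}^Q_t)=\frac{\mathbb{E}^0(L_t{\bf 1}_{\tau_x>t}|\mathcal{F}^Q_t)}{\mathbb{E}^0(L_t|\mathcal{F}^Q_t)},
\]
where I use that $Y{\bf 1}_{\tau_x>t}$ is $\mathcal{F}_b$-measurable so that $\mathbb{E}^0(L_TY{\bf 1}_{\tau_x>t}|\mathcal{F}_b)=L_bY{\bf 1}_{\tau_x>t}$. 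Since $\mathbb{E}^0(L_b|\mathcal{F}^Q_t)=\mathbb{E}^0(L_t|\mathcal{F}^Q_t)$ by the martingale property and the tower rule, these two common denominators cancel when I form the ratio, leaving exactly
\[
\mathbb{E}(Y{\bf 1}_{\tau_x>t}|\mathcal{G}_t)={\bf 1}_{\tau_x>t}\frac{\mathbb{E}^0(L_bY{\bf 1}_{\tau_x>t}|\mathcal{F}^Q_t)}{\mathbb{E}^0(L_t{\bf 1}_{\tau_x>t}|\mathcal{F}^Q_t)},
\]
which is (\ref{equat1}). The instance $Y={\bf 1}_{a<\tau_x<b}$ is immediate since this is $\mathcal{F}_b$-measurable and ${\bf 1}_{a<\tau_x<b}{\bf 1}_{\tau_x>t}={\bf 1}_{a<\tau_x<b}$ when $a>t$.

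The main obstacle I anticipate is the careful justification of the key-lemma step, namely why conditioning by $\mathcal{G}_t$ reduces to conditioning by $\mathcal{F}^Q_t$ after dividing by $\mathbb{P}(\tau_x>t|\mathcal{F}^Q_t)$ on the set $\{\tau_x>t\}$; this requires checking that $\mathcal{D}_t\cap\{\tau_x>t\}$ is trivial in the appropriate sense and that the conditional-expectation manipulations respect the null sets under the (equivalent) measures $\mathbb{P}$ and $\mathbb{P}^\circ$. Here the strict positivity $\mathbb{E}^0({\bf 1}_{\tau_x>t}L_t|\mathcal{F}^Q_t)>0$ proved in the first part is exactly what makes the division legitimate, so the two parts of the lemma are logically intertwined and I would present the positivity statement first. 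Everything else is a routine application of the abstract Bayes theorem and the tower property.
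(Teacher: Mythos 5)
Your proposal follows essentially the same route as the paper's proof: the progressive-enlargement key lemma (on $\{\tau_{x}>t\}$ every $\mathcal{G}_{t}$-measurable variable coincides with an $\mathcal{F}_{t}^{Q}$-measurable one, cited from Jeanblanc--Rutkowski), followed by the Kallianpur--Striebel/Bayes formula under $d\mathbb{P}/d\mathbb{P}^{\circ}=L_{T}$ with cancellation of the common denominator $\mathbb{E}^{0}(L_{b}|\mathcal{F}_{t}^{Q})=\mathbb{E}^{0}(L_{t}|\mathcal{F}_{t}^{Q})$. The one soft spot is the positivity step: you correctly reduce it to $\mathbb{P}(\tau_{x}>t)>0$ via the independence of $Q$ and $X$ under $\mathbb{P}^{\circ}$ and the equivalence of measures, but then justify $\mathbb{P}(\tau_{x}>t)>0$ circularly (``because $G(t,x)>0$''), whereas the paper proves it by contradiction from the density formula (\ref{density}); the fact itself is elementary, e.g.\ $\mathbb{P}(\tau_{x}>t)\geq \mathbb{P}(N_{t}=0)\,\mathbb{P}(\tilde{\tau}_{x}>t)>0$, so this is an easily filled hole rather than a flaw in the approach.
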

\begin{proof}
 Assume that there exists $t_{0} $ such that 
$ \mathbb{P}(\tau_{x}>t_{0})=0.$ Then for all $t\geq t_{0},\\
\quad \mathbb{P}(\tau_{x}\leq t_{0})=1.$ It follows that the density
function of $\tau_{x} $ $f,$ defined in (\ref{density}), is the zero function on $[ t_{0}, +\infty[.$ This means that 
$\forall t\in[t_0,\infty[,$
\begin{equation*}
~f(t,x)= \lambda\mathbb{E}(1_{\tau_{x}>t}(1-F_{Y})(x-X_{t}))+ \mathbb{E}(1_{\tau_{x} > T_{N_{t}}}
                   \tilde{f}(t-T_{N_{t}},x-X_{T_{N_{t}}}))=0 \quad \mathbb{P}-a.s.
\end{equation*}
Then, $ \mathbb{P}(\tau_{x}\leq t)=1$ implies that $  \mathbb{E}(1_{\tau_{x}>t}(1-F_{Y})(x-X_{t}))=0.$ \\ Thus 
$\mathbb{E}(1_{\tau_{x} > T_{N_{t}}} \tilde{f}(t-T_{N_{t}},x-X_{T_{N_{t}}}))=0 .$ But we have $t-T_{N_{t}}>0\quad \mathbb{P}
-a.s$ and on the set $\{\tau_{x}>T_{N_{t}}\}, \quad x-X_{T_{N_{t}}}>0.$ Therefore, $\tilde{f}(t-T_{N_{t}},x-X_{T_{N_{t}}})>0$
for all $t \geq t_{0}.$ Hence, we obtain ${\bf 1}_{\tau_{x}>T_{N_{t}}}=0, \forall t \geq t_{0}$
what is not possible. Indeed, 
\begin{equation*}
 {\bf 1}_{\tau_x > T_{N_{t}}}=0 \Longleftrightarrow \sum_{n\geq 0} {\bf 1}_{\tau_x > T_n}{\bf 1}_{N_t = n} =0
\end{equation*}
That means for all $n \in \mathbb{N}, \mathbb{P}(T_n < t< T_{n+1}, \tau_x > T_n) = 0.$  In particular, for $n = 0,$
\begin{equation*}
 \mathbb{P}(T_1 > t, \tilde{\tau}_x > 0)= \mathbb{P}(\tilde{\tau}_x > 0)\mathbb{P}(T_1 > t) = e^{\lambda t} \neq 0.
\end{equation*}
 Thus for any $t,$ $ \mathbb{P}(\tau_{x}>t)>0$ and  $ \mathbb{E}({\bf 1}_{\tau_{x}>t}|\mathcal{F}_{t}^{Q})> 0.$
 \\
 
On the set $\{\tau_{x}> t\}$, any $\mathcal{G}_{t}-$ mesurable random variable coincides with some $\mathcal{F}_{t}^{Q}-$
measurable random variable (cf. Jeanblanc and Rutkovski
 \cite{jeanblanc-rut} p. 18).  Then for all $Y\in L^1({\cal F}_b,\mathbb{P})$,  there exists a
$   \mathcal{F}_{t}^{Q}-$ measurable random variable $ Z $ such that
\begin{equation*}
 \mathbb{E}({\bf 1}_{\tau_{x}>t} Y|\mathcal{G}_{t}) = {\bf 1}_{\tau_{x}>t}Z.
\end{equation*}
Taking the conditional expectation with respect to $\mathcal{F}_{t}^{Q}, $ we get
\begin{equation*}
  \mathbb{E}({\bf 1}_{\tau_{x}>t} Y|\mathcal{F}_{t}^{Q}) = Z \mathbb{E}(\tau_{x}>t|\mathcal{F}_{t}^{Q}).
\end{equation*}
This implies that 
\begin{equation*}
 \mathbb{E}({\bf 1}_{\tau_{x}>t} Y|\mathcal{G}_{t})=     {\bf 1}_{\tau_{x}>t} \frac{\mathbb{E}(Y {\bf 1}_{\tau_{x}>t}
 |\mathcal{F}_{t}^{Q})}{\mathbb{E}({\bf 1}_{\tau_{x}>t}|\mathcal{F}_{t}^{Q})}.
 \end{equation*}
 Using Kallianpur-Striebel formula (see Pardoux  \cite{Pardoux})  and
$\mathbb{E}^{0}(L_{b}|\mathcal{F}_{t}^{Q})=L_t$ we obtain 
\begin{equation*}
\label{equat2}
\mathbb{E}(Y {\bf 1}_{\tau_{x}>t}|\mathcal{G}_{t}) =
 {\bf 1}_{\tau_{x}>t}
 \frac{\mathbb{E}^{0}(L_{b}{\bf 1}_{\tau_{x}>t} Y|\mathcal{F}_{t}^{Q})}
 {\mathbb{E}^{0}({\bf 1}_{\tau_{x}>t}L_{t}|\mathcal{F}_{t}^{Q})} .
\end{equation*}
\end{proof}
The following is in \cite{Coutin}.
\begin{lemme}
\label{lemme2}
The family 
\begin{equation*}
\mathcal{S}_{t} = \left\{ S_{t} = \exp\left(\int_{0}^{t}\rho_{s}dQ_{s} - \frac{1}{2}\int_{0}^{t}\rho_{s}^{2}ds\right),
\rho \in L^{2} ([0,T], \mathbb{R})\right\}
\end{equation*}
is total in $L^{2}(\Omega, \mathcal{F}_{t}^{Q}, \mathbb{P}^{0}).$
\end{lemme}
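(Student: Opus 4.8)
The plan is to prove totality by showing that the orthogonal complement of $\mathcal{S}_t$ in $L^2(\Omega,\mathcal{F}_t^Q,\mathbb{P}^0)$ is trivial. First I would record that the statement is well posed: since $Q$ is a $\mathbb{P}^0$-Brownian motion, $\mathbb{E}^0[S_t^2]=\exp\!\big(\int_0^t\rho_s^2\,ds\big)<\infty$, so $\mathcal{S}_t\subset L^2(\mathbb{P}^0)$. Now let $Z\in L^2(\Omega,\mathcal{F}_t^Q,\mathbb{P}^0)$ with $\mathbb{E}^0[Z\,S_t]=0$ for every $\rho\in L^2([0,T],\mathbb{R})$. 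It suffices to use only step functions $\rho$; since the factor $\exp\!\big(-\tfrac12\int_0^t\rho_s^2\,ds\big)$ is a deterministic positive constant, the orthogonality condition is equivalent to
$$
\mathbb{E}^0\!\Big[Z\exp\Big(\textstyle\sum_{j=1}^n\lambda_j\,(Q_{t_{j+1}}-Q_{t_j})\Big)\Big]=0
$$
for every finite deterministic partition $0=t_1<\cdots<t_{n+1}\le t$ and all reals $\lambda_1,\dots,\lambda_n$.

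Fix such a partition and set $\Delta=(Q_{t_2}-Q_{t_1},\dots,Q_{t_{n+1}}-Q_{t_n})$, a centred Gaussian vector under $\mathbb{P}^0$ with all exponential moments finite. The key analytic step is that the map $\lambda\mapsto\varphi(\lambda):=\mathbb{E}^0\big[Z\,e^{\langle\lambda,\Delta\rangle}\big]$ is well defined and smooth on a neighbourhood of $0$ in $\mathbb{R}^n$: by Cauchy–Schwarz, $\mathbb{E}^0\big[|Z|\,e^{\langle\lambda,\Delta\rangle}\big]\le\|Z\|_{L^2}\,\|e^{\langle\lambda,\Delta\rangle}\|_{L^2}<\infty$, and this local bound provides the domination needed to differentiate under the expectation (equivalently, one may extend $\varphi$ analytically to a complex polydisc and evaluate on $i\mathbb{R}^n$). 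Since $\varphi\equiv0$ on a neighbourhood of $0$, differentiating repeatedly at $\lambda=0$ gives $\mathbb{E}^0[Z\,\Delta^\alpha]=0$ for every multi-index $\alpha$, hence $\mathbb{E}^0[Z\,P(\Delta)]=0$ for every polynomial $P$. As the law of $\Delta$ is Gaussian, polynomials are dense in $L^2$ of that law (Hermite polynomials form an orthonormal basis), so $\mathbb{E}^0\big[Z\mid\sigma(Q_{t_1},\dots,Q_{t_{n+1}})\big]=0$ for every finite partition.

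Finally, refining the partitions along all dyadic rationals of $[0,t]$, the $\sigma$-algebras $\sigma(Q_{t_1},\dots,Q_{t_{n+1}})$ increase (up to $\mathbb{P}^0$-null sets) to $\mathcal{F}_t^Q$, so by the martingale convergence theorem $Z=\mathbb{E}^0[Z\mid\mathcal{F}_t^Q]=\lim_n\mathbb{E}^0[Z\mid\sigma(Q_{t_1},\dots)]=0$ $\mathbb{P}^0$-a.s. Hence $\mathcal{S}_t^\perp=\{0\}$ and $\mathcal{S}_t$ is total. I expect the only genuine obstacle to be the middle paragraph, namely justifying differentiation under the expectation (or the analytic continuation to imaginary arguments that kills the characteristic function of the finite-dimensional projections): this rests precisely on pairing $Z\in L^2$ against the Gaussian exponential moments via Cauchy–Schwarz; the reduction to step functions and the lifting from cylinder $\sigma$-algebras to $\mathcal{F}_t^Q$ are routine.
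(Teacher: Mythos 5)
Your argument is correct and complete: reduce to step functions, observe that $\lambda\mapsto\mathbb{E}^0[Z e^{\langle\lambda,\Delta\rangle}]$ vanishes identically and is real-analytic thanks to the Gaussian exponential moments paired against $Z\in L^2$ by Cauchy--Schwarz, deduce that $Z$ is orthogonal to all polynomials in the increments (hence to all of $L^2$ of the cylinder $\sigma$-algebra, by density of Hermite polynomials), and conclude by martingale convergence along refining partitions. Note, however, that the paper offers no proof of this lemma at all --- it simply cites Coutin (1996) with the sentence ``The following is in [Coutin]'' --- so there is nothing internal to compare against; your write-up is the classical totality argument from the filtering literature and correctly identifies the one nontrivial analytic point (differentiation under the expectation, or equivalently analytic continuation to imaginary arguments).
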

\begin{lemme}
\label{lemme3}
 Let $ \{U_{t}, t\geq 0\} $ be an 
 $\mathcal{F}^{W}\otimes\mathcal{F}^M-$progressively measurable process such that for all $ t\geq 0, $  we have 
 \begin{equation*}\displaystyle
  \mathbb{E}^{0}\left[\int_{0}^{t}U_{s}^{2}ds \right] < + \infty.
 \end{equation*}
Then
\begin{equation}\displaystyle
     \mathbb{E}^{0}\left[\int_{0}^{t}U_{s}dQ_{s}|\mathcal{F}_t^{W}\otimes\mathcal{F}_t^M\right] = 0.
\end{equation}
\end{lemme}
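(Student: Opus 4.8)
The plan is to establish the identity first for elementary integrands and then pass to the limit, exploiting that, under $\mathbb{P}^{0}=\mathbb{P}^{Q}\otimes\mathbb{P}^{W}\otimes\mathbb{P}^{M}$, the Brownian motion $Q$ lives on the factor $\Omega^{Q}$ and is therefore independent of the $\sigma$-field $\mathcal{F}^{W}\otimes\mathcal{F}^{M}$. As a consequence $Q$ is still a Brownian motion with respect to the enlarged filtration $\mathcal{H}_{t}:=(\mathcal{F}^{W}_{t}\otimes\mathcal{F}^{M}_{t})\vee\mathcal{F}^{Q}_{t}$, every $\mathcal{F}^{W}\otimes\mathcal{F}^{M}$-progressively measurable process is $\mathcal{H}$-progressively measurable, and under the assumption $\mathbb{E}^{0}[\int_{0}^{t}U_{s}^{2}\,ds]<\infty$ the integral $\int_{0}^{t}U_{s}\,dQ_{s}$ is a well-defined square-integrable $(\mathcal{H},\mathbb{P}^{0})$-martingale. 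Intuitively the result is clear: conditionally on a fixed outcome of $(W,N,(Y_{i}))$ the path $s\mapsto U_{s}$ is non-random in the $Q$-variable, so $\int_{0}^{t}U_{s}\,dQ_{s}$ is a centered Wiener integral and its conditional mean vanishes; the steps below make this precise.

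\emph{Step 1: step processes.} Suppose $U_{s}=\sum_{k=0}^{n-1}\xi_{k}{\bf 1}_{(t_{k},t_{k+1}]}(s)$ with $0=t_{0}<\dots<t_{n}=t$ and $\xi_{k}\in L^{2}(\mathcal{F}^{W}_{t_{k}}\otimes\mathcal{F}^{M}_{t_{k}},\mathbb{P}^{0})$. Then $\int_{0}^{t}U_{s}\,dQ_{s}=\sum_{k}\xi_{k}(Q_{t_{k+1}}-Q_{t_{k}})$. Each $\xi_{k}$ is $\mathcal{F}^{W}_{t}\otimes\mathcal{F}^{M}_{t}$-measurable while $Q_{t_{k+1}}-Q_{t_{k}}$ is independent of $\mathcal{F}^{W}_{t}\otimes\mathcal{F}^{M}_{t}$ with zero mean, hence
$$\mathbb{E}^{0}[\xi_{k}(Q_{t_{k+1}}-Q_{t_{k}})\mid\mathcal{F}^{W}_{t}\otimes\mathcal{F}^{M}_{t}]=\xi_{k}\,\mathbb{E}^{0}[Q_{t_{k+1}}-Q_{t_{k}}]=0,$$
and summing over $k$ gives the claim for step integrands.

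\emph{Step 2: approximation.} For a general $U$ satisfying the hypothesis, I would pick a sequence $(U^{n})$ of $\mathcal{F}^{W}\otimes\mathcal{F}^{M}$-adapted step processes with $\mathbb{E}^{0}[\int_{0}^{t}(U^{n}_{s}-U_{s})^{2}\,ds]\to0$, which is the standard construction of the It\^o integral for $\mathcal{F}^{W}\otimes\mathcal{F}^{M}$-progressively measurable, square-integrable integrands. By the It\^o isometry, $\int_{0}^{t}U^{n}_{s}\,dQ_{s}\to\int_{0}^{t}U_{s}\,dQ_{s}$ in $L^{2}(\mathbb{P}^{0})$, and since conditional expectation is an $L^{2}$-contraction,
$$\mathbb{E}^{0}\Bigl[\int_{0}^{t}U^{n}_{s}\,dQ_{s}\,\Big|\,\mathcal{F}^{W}_{t}\otimes\mathcal{F}^{M}_{t}\Bigr]\longrightarrow\mathbb{E}^{0}\Bigl[\int_{0}^{t}U_{s}\,dQ_{s}\,\Big|\,\mathcal{F}^{W}_{t}\otimes\mathcal{F}^{M}_{t}\Bigr]\quad\text{in }L^{2}.$$
By Step 1 the left-hand side vanishes for every $n$, hence so does the limit, which is the assertion.

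The only delicate point is the approximation in Step 2: one must be sure the step processes $U^{n}$ can be taken adapted to the smaller filtration $\mathcal{F}^{W}\otimes\mathcal{F}^{M}$, not merely to $\mathcal{H}$, so that Step 1 applies without change. This is precisely where the progressive-measurability hypothesis (rather than plain adaptedness) is used, and it is a routine feature of the construction of the stochastic integral; everything else is immediate from the product structure of $\mathbb{P}^{0}$ and the independence of $Q$ from $\mathcal{F}^{W}\otimes\mathcal{F}^{M}$.
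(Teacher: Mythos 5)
Your proof is correct, but it follows a genuinely different route from the paper's. The paper argues by duality: it tests $\mathbb{E}^{0}\bigl[\int_{0}^{t}U_{s}\,dQ_{s}\mid\mathcal{F}_t^{W}\otimes\mathcal{F}_t^{M}\bigr]$ against the total family $\mathcal{R}_t$ of Dol\'eans exponentials driven by $W$ and $\tilde{M}$ (the analogue of Lemma~2 for the signal filtration), applies the integration-by-parts formula to the product $r_{t}\int_{0}^{t}U_{s}\,dQ_{s}$, and observes that the only surviving terms are the cross-brackets $\langle W,Q\rangle$ and $\langle\tilde{M},Q\rangle$, which vanish by independence; totality then forces the conditional expectation to be zero. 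You instead prove the identity directly for elementary integrands adapted to $\mathcal{F}^{W}\otimes\mathcal{F}^{M}$, where it reduces to independence and zero mean of the increments of $Q$, and pass to the limit using the It\^o isometry (with respect to the enlarged filtration $\mathcal{H}$, under which $Q$ remains a Brownian motion) and the $L^{2}$-contractivity of conditional expectation. Both arguments rest on the same fact -- the product structure of $\mathbb{P}^{0}$ making $Q$ independent of $(W,M)$ -- but yours is more elementary and self-contained, needing only the basic construction of the stochastic integral, whereas the paper's fits the testing-against-exponential-martingales machinery it reuses throughout (Lemma~2, Proposition~2) and avoids having to discuss the adaptedness of approximating step processes. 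The one delicate point in your version, that the approximating simple processes can be chosen adapted to the smaller filtration $\mathcal{F}^{W}_{\cdot}\otimes\mathcal{F}^{M}_{\cdot}$ rather than merely to $\mathcal{H}$, is real but standard, and you correctly flag it.
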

\begin{proof}
As in Lemma \ref{lemme2}, the family 
\begin{equation*}
 \mathcal{R}_{t} =\left\{ r_{t} = \mathcal{E}\left[\int_{0}^{t}\gamma_{s}dW_{s} +\int_{0}^{t}\int_{A}(e^{\beta_{s}(x)}-1)
 \tilde{M}(dsdx)\right], \gamma \in L^{2}([0,T],\mathbb{R}]), \beta \in L^{\infty}([0,T]\times A, \mathbb{R})\right\}
\end{equation*}
is total in $ L^{2}( \Omega, \mathcal{F}^{W}\otimes\mathcal{F}^M, \mathbb{P}^{0}),$ where $\tilde{M} $ is a compensated Poisson random measure
on 
$\mathbb{R}\times \mathbb{R}$ and $ A \subset \mathbb{R} $ is a Borel set. 
Therefore, since $ r_{t} = 1 + \int_{0}^{t}r_{s}\gamma_{s}dW_{s}+\int_{0}^{t}\int_{A}r_{s}(e^{\beta_{s}(x)}-1)
\tilde{N}(dsdx),$ by It\^o's formula, we have
\begin{align*}
 \mathbb{E}^{0}\left( r_{t}\mathbb{E}^{0}\left[\int_{0}^{t}U_{s}dQ_{s}|\mathcal{F}_t^{W}\otimes\mathcal{F}_t^M\right]\right)&=
 \mathbb{E}^{0}\left[r_{t}\int_{0}^{t}U_{s}dQ_{s}\right] \\
 & = \mathbb{E}^{0}\left[\int_{0}^{t}r_{s}\gamma_{s}U_{s}d<W,Q>_{s}\right]\\
 &+ \mathbb{E}^{0}\left[\int_{0}^{t}U_{s}\int_{A}r_{s}(e^{\beta_{s}(x)}-1)d<\tilde{M},Q>_{s}\right]= 0 .
\end{align*}
The equality is obtained from the fact that  $<Q,W>=<Q,\tilde{M}>=0$ by independence. 
\end{proof}
\begin{proposition}\label{propo2}
For any bounded function $\varphi$ such that $\varphi(\tau_x)$ is ${\cal F}^X_T$-measurable,
\begin{equation}\label{equat3}
 \mathbb{E}^{0}(\varphi(\tau_{x}){\bf 1}_{\tau_{x>t}}L_{T}|\mathcal{F}_{t}^{Q})= 
 \mathbb{E}^{0}[\varphi(\tau_{x}){\bf 1}_{\tau_{x>t}}] + 
  \int_{0}^{t}\mathbb{E}^{0}[{\bf 1}_{\tau_{x>u}}L_{u}h(X_{u})
 \varphi(\tau_{x-X_{u}})|\mathcal{F}_{u}^{Q}]dQ_{u}.
\end{equation}
\end{proposition}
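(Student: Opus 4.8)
The plan is to identify both sides of \eqref{equat3} as elements of $L^2(\Omega,\mathcal{F}^Q_t,\mathbb{P}^0)$ by testing against the total family $\mathcal{S}_t$ of Lemma \ref{lemme2}. Write the right–hand side as $R_t:=\mathbb{E}^0[\varphi(\tau_x)\mathbf{1}_{\tau_x>t}]+\int_0^tG_u\,dQ_u$ with $G_u:=\mathbb{E}^0[\mathbf{1}_{\tau_x>u}L_uh(X_u)\varphi(\tau_{x-X_u})\mid\mathcal{F}^Q_u]$, and the left–hand side as $Z_t:=\mathbb{E}^0(\varphi(\tau_x)\mathbf{1}_{\tau_x>t}L_T\mid\mathcal{F}^Q_t)$. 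Since $h$ and $\varphi$ are bounded and $\mathbb{E}^0(L_u^2)=\mathbb{E}^0\big(\mathcal{E}(2\!\int\! h(X)\,dQ)_u\big)\,e^{\int_0^uh^2(X)ds}\le e^{\|h\|_\infty^2u}$, first check that $G_u$, hence $Z_t$ and $R_t$, lie in $L^2(\mathcal{F}^Q_t)$ and that $\mathbb{E}^0(S_u^2)=e^{\int_0^u\rho_s^2ds}$; by Lemma \ref{lemme2} it then suffices to prove $\mathbb{E}^0(S_tZ_t)=\mathbb{E}^0(S_tR_t)$ for every $S_t=\exp\!\big(\int_0^t\rho_s\,dQ_s-\tfrac12\int_0^t\rho_s^2\,ds\big)$, $\rho\in L^2([0,T])$.

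For $R_t$: $\mathbb{E}^0(S_t)=1$, and since $S_t=1+\int_0^tS_u\rho_u\,dQ_u$ an integration by parts together with the tower property (the $dQ$–integrals being true martingales by the integrability above) gives
\begin{equation*}
\mathbb{E}^0(S_tR_t)=\mathbb{E}^0[\varphi(\tau_x)\mathbf{1}_{\tau_x>t}]+\int_0^t\rho_u\,\mathbb{E}^0\!\big(S_uL_u\mathbf{1}_{\tau_x>u}h(X_u)\varphi(\tau_{x-X_u})\big)\,du .
\end{equation*}
For $Z_t$: $\mathbb{E}^0(S_tZ_t)=\mathbb{E}^0\big(S_t\varphi(\tau_x)\mathbf{1}_{\tau_x>t}L_T\big)$. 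The key point is that under $\mathbb{P}^0$ the observation $Q$ is a Brownian motion independent of the signal $(W,N,(Y_i))$, so conditionally on $\mathcal{F}^X_\infty$ the Wiener integral $\int_0^T(\rho_u\mathbf{1}_{u\le t}+h(X_u))\,dQ_u$ is Gaussian with variance $\int_0^T(\rho_u\mathbf{1}_{u\le t}+h(X_u))^2\,du$ (this is the independence underlying Lemma \ref{lemme3}), whence
\begin{equation*}
\mathbb{E}^0\big(S_tL_T\mid\mathcal{F}^X_\infty\big)=\exp\!\Big(\int_0^t\rho_uh(X_u)\,du\Big),\qquad \mathbb{E}^0\big(S_uL_u\mid\mathcal{F}^X_\infty\big)=\exp\!\Big(\int_0^u\rho_vh(X_v)\,dv\Big).
\end{equation*}
Conditioning the two expressions above on $\mathcal{F}^X_\infty$ (every remaining factor being $\mathcal{F}^X$–measurable, with the Markov/shift identity $\mathbf{1}_{\tau_x>u}\varphi(\tau_{x-X_u})=\mathbf{1}_{\tau_x>u}\varphi(\tau_x)$) reduces the claim to the pathwise identity
$\int_0^t\rho_uh(X_u)e^{\int_0^u\rho_vh(X_v)dv}\,du=e^{\int_0^t\rho_vh(X_v)dv}-1$.
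Using that in the regime relevant here $\varphi$ vanishes on $[0,t]$ (e.g. $\varphi=\mathbf{1}_{(a,b)}$ with $t<a$ in the application to Lemma \ref{lemme1}), so that $\varphi(\tau_x)\mathbf{1}_{\tau_x>t}=\varphi(\tau_x)=\varphi(\tau_x)\mathbf{1}_{\tau_x>u}$ for $u\le t$, both $\mathbb{E}^0(S_tR_t)$ and $\mathbb{E}^0(S_tZ_t)$ collapse to $\mathbb{E}^0\big(\varphi(\tau_x)e^{\int_0^t\rho_uh(X_u)du}\big)$; Lemma \ref{lemme2} then yields $Z_t=R_t$ a.s.

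I expect the main technical obstacle to be the conditional–Gaussian identity $\mathbb{E}^0(S_tL_T\mid\mathcal{F}^X_\infty)=\exp(\int_0^t\rho_uh(X_u)\,du)$ — i.e. rigorously justifying that a stochastic integral against $Q$ with an $\mathcal{F}^X$–measurable integrand is, conditionally on $\mathcal{F}^X$, Gaussian with the obvious variance — together with bookkeeping the integrability required for the integration by parts and the Fubini interchange, all of which reduce to the elementary bounds $\mathbb{E}^0(L_u^2)\le e^{\|h\|_\infty^2u}$ and $\mathbb{E}^0(S_u^2)=e^{\int_0^u\rho_s^2ds}$. The indicators $\mathbf{1}_{\tau_x>\cdot}$ play only a bookkeeping role for the Markov decomposition on the support of $\varphi$.
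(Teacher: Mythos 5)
Your proof is correct and shares the paper's outer skeleton: both arguments characterize the two sides of (\ref{equat3}) by testing against the total family $\mathcal{S}_t$ of Lemma \ref{lemme2}, and your computation of $\mathbb{E}^0(S_tR_t)$ via $S_t=1+\int_0^tS_u\rho_u\,dQ_u$ is exactly the paper's ``converse'' computation of the expectation of $S_t$ times the right-hand side. Where you genuinely diverge is in evaluating $\mathbb{E}^0\bigl(S_t\varphi(\tau_x)\mathbf{1}_{\tau_x>t}L_T\bigr)$: the paper routes this through Lemma \ref{lemme7} (an It\^o product formula for $L_\cdot K_\cdot$ whose $dQ$-integrals vanish by the independence of $Q$ and $X$ under $\mathbb{P}^0$, in the spirit of Lemma \ref{lemme3}), followed by the Markov property of $X$ applied at each time $u$ inside the resulting time integral; you instead condition once on $\mathcal{F}^X_\infty$ and evaluate $\mathbb{E}^0(S_tL_T\mid\mathcal{F}^X_\infty)=\exp\bigl(\int_0^t\rho_uh(X_u)\,du\bigr)$ in closed form from the conditional Gaussianity of Wiener integrals with $\mathcal{F}^X$-measurable integrands. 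It is the same independence being exploited, but your packaging is more transparent --- both sides collapse to the single explicit quantity $\mathbb{E}^0\bigl(\varphi(\tau_x)e^{\int_0^t\rho_uh(X_u)du}\bigr)$ and the identity reduces to the scalar fact $\int_0^t\rho_uh(X_u)e^{\int_0^u\rho h\,dv}du=e^{\int_0^t\rho h\,dv}-1$ --- at the cost of not producing the reusable intermediate statement of Lemma \ref{lemme7}, which the paper needs again in Proposition \ref{propo3}.

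One point you flag deserves emphasis: your restriction to $\varphi$ vanishing on $[0,t]$ is not merely convenient, it is necessary. The Markov property actually produces $\mathbf{1}_{\tau_x>u}\mathbb{E}^{0,u}\bigl(\varphi(\tau_{x-X_u})\mathbf{1}_{\tau_{x-X_u}>t-u}\bigr)$, and the paper drops the indicator $\mathbf{1}_{\tau_{x-X_u}>t-u}$ without comment when passing to (\ref{equat3}); for a general bounded $\varphi$ (e.g.\ $\varphi\equiv1$) your two collapsed forms show the two sides of (\ref{equat3}) differ. Since Corollaries \ref{coro1} and \ref{coro2} only use $\varphi$ supported in $(t,\infty]$, your argument proves the proposition in exactly the form in which it is used.
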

\begin{proof}
Let $ S_{t} \in \mathcal{S}_{t}. $
Lemma \ref{lemme7} applied to    $Y=~\varphi(\tau_x){\bf 1}_{\tau_{x}>t}$  
which belongs to $L^\infty(\Omega,\mathbb{P}^{0},{\cal F}^X_T)$
implies 
\begin{equation*}
  \mathbb{E}^{0}(\varphi(\tau_{x}){\bf 1}_{\tau_{x>t}}L_{T}|\mathcal{F}_{t}^{Q})=  
  \mathbb{E}^{0}[\varphi(\tau_{x}){\bf 1}_{\tau_{x>t}}] +
\mathbb{E}^{0}\left(\int_{0}^{t} \mathbb{E}^{0}(\varphi(\tau_x){\bf 1}_{\tau_{x}>t}|\mathcal{F}_{u})
L_{u}S_{u}\rho_{u}h(X_{u})du \right).
\end{equation*}
The $(\mathbb{P}^{0}, \mathcal{F})-$ Markov property of $X$ permits us to write
\begin{align*}
 \mathbb{E}^{0}(\varphi(\tau_{x}){\bf 1}_{\tau_{x>t}}L_{T}|\mathcal{F}_{t}^{Q})=  &
  \mathbb{E}^{0}[\varphi(\tau_{x}){\bf 1}_{\tau_{x>t}}] +\\
 & \mathbb{E}^{0}\left(\int_{0}^{t}L_{u}S_{u}\rho_{u}h(X_{u}){\bf 1}_{\tau_{x}> u}
\mathbb{E}^{0,u}(\varphi(\tau_{x-X_u}){\bf 1}_{\tau_{x-X_u >t-u}})du \right)\\
\mbox{ where } \mathbb{E}^{0,u} (.)= \mathbb{E}^{0}(.|\mathcal{F}_u).
\end{align*}
Conditioning by $ \mathcal{F}_{u}^{Q}$  under the time integral,
it follows that
\begin{align*}
\mathbb{E}^{0}(\varphi(\tau_{x}){\bf 1}_{\tau_{x>t}}L_{T}|\mathcal{F}_{t}^{Q})=  &
  \mathbb{E}^{0}[\varphi(\tau_{x}){\bf 1}_{\tau_{x>t}}] +\\ 
& \mathbb{E}^{0}\left(\int_{0}^{t}S_{u}\rho_{u}\mathbb{E}^{0}(L_u h(X_{u}){\bf 1}_{\tau_{x}> u}
\varphi(\tau_{x-X_u})|\mathcal{F}_u^Q)du \right).
\end{align*}
Conversely compute the expectation of the product of $S_t=1+\int_0^tS_u\rho_udQ_u$ by right part of (\ref{equat3}):
\begin{align*}
&\mathbb{E}^{0}\left[S_t(\mathbb{E}^{0}[\varphi(\tau_{x}){\bf 1}_{\tau_{x>t}}] +  
  \int_{0}^{t}\mathbb{E}^{0}(L_u h(X_{u}){\bf 1}_{\tau_{x}> u}
\varphi(\tau_{x-X_u})|\mathcal{F}_u^Q)dQ{u})\right]=\\
  & \mathbb{E}^{0}[\varphi(\tau_{x}){\bf 1}_{\tau_{x>t}}] +
 \mathbb{E}^{0}\left(\int_{0}^{t}S_{u}\rho_{u}\mathbb{E}^{0}(L_u h(X_{u}){\bf 1}_{\tau_{x}> u}
\varphi(\tau_{x-X_u})|\mathcal{F}_u^Q)du \right)
\end{align*}
Since $\mathcal{S} $ is dense in $ L^{2}(\Omega, \mathcal{F}^{Q}, \mathbb{P}^{0}),$ 
 \begin{equation*}
\mathbb{E}^{0}(\varphi(\tau_{x}){\bf 1}_{\tau_{x>t}}L_{T}|\mathcal{F}_{t}^{Q})= 
 \mathbb{E}^{0}[\varphi(\tau_{x}){\bf 1}_{\tau_{x>t}}] + 
  \int_{0}^{t}\mathbb{E}^{0}[{\bf 1}_{\tau_{x>u}}L_{u}h(X_{u})
 \varphi(\tau_{x-X_{u}})|\mathcal{F}_{u}^{Q}]dQ_{u}.
 \end{equation*}
 \end{proof}
 By this proposition, we etablish two corollaries which give a representation more accessible of the processes
 $ t \longmapsto \mathbb{E}^{0}({L_{b}\bf 1}_{a<\tau_{x}<b}|\mathcal{F}_{t}^{Q})$ and 
 $ t \longmapsto \mathbb{E}^{0}({\bf 1}_{\tau_{x}>T}L_{T}|\mathcal{F}_{t}^{Q}) $
 
 \begin{corollary}\label{coro1}
 For all $ t< a <b, $ we have $ \mathbb{P}^{0}-a.s $
 \begin{align}
 \label{equat4}
  \mathbb{E}^{0}({L_{b}\bf 1}_{a<\tau_{x}<b}|\mathcal{F}_{t}^{Q}) &=  \mathbb{P}^{0}(a<\tau_{x}<b) +\\ \nonumber \displaystyle
 & \int_{0}^{t}\mathbb{E}^{0}({\bf 1}_{\tau_{x}>u}L_{u}h(X_{u})[G(a-u,x-\!X_{u})-G(b-u,x-\!X_{u})]|
  \mathcal{F}_{u}^{Q})dQ_{u}.
 \end{align}
\end{corollary}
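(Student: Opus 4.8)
The plan is to obtain (\ref{equat4}) as a direct specialization of Proposition~\ref{propo2}. I would take $\varphi={\bf 1}_{]a,b[}$, which is bounded; since $t<a<b$ one may assume $b\leq T$ (if not, enlarge the maturity $T$, which does not affect the right-hand side of (\ref{equat4})), so that $\varphi(\tau_{x})={\bf 1}_{a<\tau_{x}<b}$ is $\mathcal{F}^{X}_{b}\subseteq\mathcal{F}^{X}_{T}$-measurable and Proposition~\ref{propo2} applies. Because $a>t$ we have $\varphi(\tau_{x}){\bf 1}_{\tau_{x}>t}={\bf 1}_{a<\tau_{x}<b}$, so (\ref{equat3}) reads
\begin{equation*}
\mathbb{E}^{0}({\bf 1}_{a<\tau_{x}<b}L_{T}\,|\,\mathcal{F}^{Q}_{t})=\mathbb{P}^{0}(a<\tau_{x}<b)+\int_{0}^{t}\mathbb{E}^{0}[{\bf 1}_{\tau_{x}>u}L_{u}h(X_{u})\varphi(\tau_{x-X_{u}})\,|\,\mathcal{F}^{Q}_{u}]\,dQ_{u}.
\end{equation*}

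It then remains to match this with (\ref{equat4}) term by term. First I would replace $L_{T}$ by $L_{b}$ on the left: since $\{a<\tau_{x}<b\}\in\mathcal{F}_{b}$, $\mathcal{F}^{Q}_{t}\subseteq\mathcal{F}_{b}$, and $(L_{s})$ is a $(\mathbb{P}^{0},\mathcal{F})$-martingale with $\mathbb{E}^{0}(L_{T}|\mathcal{F}_{b})=L_{b}$, the tower property gives $\mathbb{E}^{0}({\bf 1}_{a<\tau_{x}<b}L_{T}|\mathcal{F}^{Q}_{t})=\mathbb{E}^{0}({\bf 1}_{a<\tau_{x}<b}L_{b}|\mathcal{F}^{Q}_{t})$, which is the left-hand side of (\ref{equat4}); and $\mathbb{E}^{0}[{\bf 1}_{a<\tau_{x}<b}]=\mathbb{P}^{0}(a<\tau_{x}<b)$ is its first term. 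For the integrand, I would unwind the Markov reduction performed in the proof of Proposition~\ref{propo2}: on $\{\tau_{x}>u\}$ one has $\tau_{x}=u+\tau_{x-X_{u}}\circ\theta_{u}$, so for fixed $u\leq t$ the term under the integral equals $\mathbb{E}^{0}[{\bf 1}_{\tau_{x}>u}L_{u}h(X_{u})\,g(X_{u})\,|\,\mathcal{F}^{Q}_{u}]$, where $g(y)=\mathbb{E}^{0}[\varphi(u+\tau_{y}){\bf 1}_{u+\tau_{y}>t}]$ and $\tau_{y}$ is the first passage time over the constant level $y$ by an independent copy of $X$ (legitimate because $X$ has independent and stationary increments). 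For $\varphi={\bf 1}_{]a,b[}$ and $u\leq t<a<b$ the constraint $u+\tau_{y}>t$ is automatic on $\{u+\tau_{y}>a\}$, hence $g(y)=\mathbb{P}^{0}(a-u<\tau_{y}<b-u)=\mathbb{P}^{0}(\tau_{y}>a-u)-\mathbb{P}^{0}(\tau_{y}\geq b-u)$; since the first passage law has no atom at finite times (only possibly at $+\infty$), this is $G(a-u,y)-G(b-u,y)$. Taking $y=x-X_{u}$ — with $x-X_{u}>0$ on $\{\tau_{x}>u\}$ and $a-u>0$ — turns the integrand into $\mathbb{E}^{0}({\bf 1}_{\tau_{x}>u}L_{u}h(X_{u})[G(a-u,x-X_{u})-G(b-u,x-X_{u})]\,|\,\mathcal{F}^{Q}_{u})$, and (\ref{equat4}) follows.

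The only genuinely delicate point is this last step: reconciling the generic $\varphi(\tau_{x-X_{u}})$ of Proposition~\ref{propo2} with the explicit difference $G(a-u,x-X_{u})-G(b-u,x-X_{u})$, i.e.\ correctly transporting the time shift through the Markov property and checking that the surviving indicator ${\bf 1}_{\tau_{x}>t}$ drops out because $a>t$. Everything else — the passage from $L_{T}$ to $L_{b}$ and the identification of the unconditional terms — is routine use of the martingale property of $L$ and of the independence and stationarity of the increments of $X$.
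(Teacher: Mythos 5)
Your proof is correct and follows the same route as the paper, which states Corollary~\ref{coro1} as a direct specialization of Proposition~\ref{propo2} to $\varphi={\bf 1}_{]a,b[}$ without giving further detail. Your careful handling of the passage from $L_{T}$ to $L_{b}$ and of the identification of $\mathbb{E}^{0,u}[{\bf 1}_{a<\tau_{x}<b}]$ with $G(a-u,x-X_{u})-G(b-u,x-X_{u})$ via the Markov property (including the absence of atoms of the first passage law at finite times) simply fills in the steps the paper leaves implicit.
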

 The next is too a particular case of proposition (\ref{propo2}).
\begin{corollary} \label{coro2}
For $ t< T , $
 \begin{equation}
 \label{equat5} 
   \mathbb{E}^{0}({\bf 1}_{\tau_{x}>T}L_{T}|\mathcal{F}_{t}^{Q})= \mathbb{P}^{0}(\tau_{x}>T) + 
  \int_{0}^{t}\mathbb{E}^{0}({\bf 1}_{\tau_{x>u}}L_{u}h(X_{u})G(T-u,x-X_{u})|\mathcal{F}_{u}^{Q})dQ_{u}.
 \end{equation}
 \end{corollary}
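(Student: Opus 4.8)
The plan is to obtain the identity as a specialization of Proposition \ref{propo2}; more precisely, I would re-run the proof of that proposition with the bounded function $\varphi = {\bf 1}_{]T,+\infty]}$ on $\bar{\mathbb{R}}_{+}$, so that $\varphi(\tau_x) = {\bf 1}_{\tau_x > T}$. Since $t < T$ we have $\{\tau_x > T\} \subset \{\tau_x > t\}$, hence $\varphi(\tau_x){\bf 1}_{\tau_x > t} = {\bf 1}_{\tau_x > T}$, and ${\bf 1}_{\tau_x > T}$ is ${\cal F}^X_T$-measurable because it depends only on the path of $X$ on $[0,T]$; thus the hypotheses of Proposition \ref{propo2} are met. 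With this choice the left-hand side of \eqref{equat3} is exactly $\mathbb{E}^{0}({\bf 1}_{\tau_{x}>T}L_{T}|\mathcal{F}_{t}^{Q})$, and the first term on its right-hand side is $\mathbb{E}^{0}[{\bf 1}_{\tau_x>T}] = \mathbb{P}^{0}(\tau_{x}>T)$, which is the first term of \eqref{equat5}.

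The remaining work is to identify the stochastic integrand. As in the proof of Proposition \ref{propo2}, I would invoke the $(\mathbb{P}^{0},\mathcal{F})$-Markov property of $X$: on $\{\tau_x > u\}$ one has $\tau_x = u + \tau_{x-X_u}\circ\theta_u$, so under the time integral the inner term is ${\bf 1}_{\tau_x>u}$ times $\mathbb{E}^{0,u}\big(\varphi(u+\tau_{x-X_u}){\bf 1}_{\tau_{x-X_u}>t-u}\big)$. For $\varphi = {\bf 1}_{]T,+\infty]}$ this equals $\mathbb{E}^{0,u}\big({\bf 1}_{\tau_{x-X_u}>T-u}{\bf 1}_{\tau_{x-X_u}>t-u}\big)$, and since $T-u>t-u$ the product of indicators reduces to ${\bf 1}_{\tau_{x-X_u}>T-u}$. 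By the stationarity and independence of the increments of $X$ together with the definition $G(s,z)=\mathbb{P}^{0}(\tau_z>s)$ from Remark \ref{rk2}, this conditional expectation equals $\mathbb{P}^{0}(\tau_z>T-u)\big|_{z=x-X_u}=G(T-u,x-X_u)$. Substituting this back and conditioning by $\mathcal{F}_{u}^{Q}$ under the integral sign, exactly as in Proposition \ref{propo2}, yields
\begin{equation*}
\int_{0}^{t}\mathbb{E}^{0}\big({\bf 1}_{\tau_{x}>u}L_{u}h(X_{u})G(T-u,x-X_{u})\,\big|\,\mathcal{F}_{u}^{Q}\big)\,dQ_{u},
\end{equation*}
which is precisely the last term of \eqref{equat5}.

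I expect the only genuinely delicate point to be the time-shift bookkeeping in the Markov property: one must pass correctly from $\{\tau_x>T\}$ to $\{\tau_{x-X_u}>T-u\}$ on the event $\{\tau_x>u\}$, and check that the auxiliary factor ${\bf 1}_{\tau_{x-X_u}>t-u}$ inherited from the restriction $\{\tau_x>t\}$ in Proposition \ref{propo2} is absorbed, which happens precisely because $T>t$. Everything else --- the boundedness and ${\cal F}^X_T$-measurability of $\varphi(\tau_x)$, the density of the family $\mathcal{S}$, and the interchange of expectation and stochastic integral --- is already supplied by Proposition \ref{propo2} and its proof, so no new estimate is needed.
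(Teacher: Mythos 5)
Your proof is correct and is exactly the route the paper takes: the paper offers no separate argument, merely declaring Corollary~\ref{coro2} a particular case of Proposition~\ref{propo2}, and your choice $\varphi={\bf 1}_{]T,+\infty]}$ together with the shift $\tau_x=u+\tau_{x-X_u}\circ\theta_u$ on $\{\tau_x>u\}$ is the intended specialization. Your reading of the integrand as $\varphi(u+\tau_{x-X_u})$ (rather than the literal $\varphi(\tau_{x-X_u})$ printed in \eqref{equat3}) is the right one, being the only reading consistent with the kernels $G(a-u,\cdot)$ and $G(T-u,\cdot)$ appearing in Corollaries~\ref{coro1} and~\ref{coro2}.
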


\begin{proposition}
\label{propo3}
  For any $ 0 < t< a < b,$ we have on the set $\{\tau_{x}>t\},$
  \begin{align} \label{equat6}
   \bar{\Gamma}_t & = \frac{\mathbb{P}^{0}(a<\tau_{x}<b)}
   {\mathbb{P}^{0}(\tau_{x}>t)}  + \int_{0}^{t} \sigma^{1} (\Phi)(t,u) dQ_{u}\\ \nonumber
   &-\int_{0}^{t}  \bar{\Gamma}_u \sigma(\Phi)(t,u))^{2}du
   +\int_{0}^{t} \bar{\Gamma}_u\sigma(\Phi)(t,u)dQ_u
   \\ \nonumber
   &-\int_{0}^{t} \sigma^{1}(\Phi)(t,u) \sigma(\Phi)(t,u) du
   . \nonumber
  \end{align}
  where
  \begin{align*}
   \bar{\Gamma}_t &= \mathbb{E}({\bf 1}_{a<\tau_x<b}|\mathcal{G}_t),\\
   \sigma^{1}(\Phi)(t,u)&= \frac{\mathbb{E}^{0}({\bf 1}_{\tau_{x}>u}L_{u}\Phi(X_{u})[G(a-u,x-X_{u})-
   G(b-u,x-X_{u})]|\mathcal{F}_{u}^{Q})} {\mathbb{E}^{0}({\bf 1}_{\tau_{x}>t}L_{u}|\mathcal{F}_{u}^{Q})} ,\\
   \sigma(\Phi)(t,u)&= \frac{\mathbb{E}^{0}({\bf 1}_{\tau_{x}>u}L_{u}\Phi(X_{u})G(t-u,x-X_{u})|\mathcal{F}_{u}^{Q})}
   {\mathbb{E}^{0}({\bf 1}_{\tau_{x}>t}L_{u}|\mathcal{F}_{u}^{Q})}.
  \end{align*}

 \end{proposition}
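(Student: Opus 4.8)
The plan is to realise $\bar{\Gamma}_{t}$, on $\{\tau_{x}>t\}$, as the quotient of two Brownian martingales of the observation filtration, and then to apply It\^o's quotient rule.

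Fix $t>0$. Applying Lemma \ref{lemme1} with $Y={\bf 1}_{a<\tau_{x}<b}$ (and noting $Y{\bf 1}_{\tau_{x}>t}=Y$ since $a>t$) gives, on $\{\tau_{x}>t\}$, $\bar{\Gamma}_{t}=N_{t}/\mathcal{D}_{t}$, where for $s\in[0,t]$ I set $N_{s}:=\mathbb{E}^{0}(L_{b}{\bf 1}_{a<\tau_{x}<b}|\mathcal{F}^{Q}_{s})$ and $\mathcal{D}_{s}:=\mathbb{E}^{0}({\bf 1}_{\tau_{x}>t}L_{t}|\mathcal{F}^{Q}_{s})$. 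Both processes are genuine $(\mathbb{P}^{0},\mathcal{F}^{Q})$-martingales, being conditional expectations of fixed $\mathbb{P}^{0}$-integrable random variables (boundedness of $h$ is enough, and every integrand occurring below is bounded since $G\le 1$). One has $N_{0}=\mathbb{P}^{0}(a<\tau_{x}<b)$, $\mathcal{D}_{0}=\mathbb{P}^{0}(\tau_{x}>t)$, and $\mathcal{D}_{t}$ is exactly the denominator of Lemma \ref{lemme1}. Furthermore $\mathcal{D}_{s}>0$ a.s., by the positivity argument used in Lemma \ref{lemme1} together with $G(r,y)=\mathbb{P}^{0}(\tau_{y}>r)>0$ for $y>0$. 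Hence $R_{s}:=N_{s}/\mathcal{D}_{s}$ is a well-defined continuous semimartingale on $[0,t]$ with $R_{t}=\bar{\Gamma}_{t}$ on $\{\tau_{x}>t\}$; on that event $R_{s}=\bar{\Gamma}_{s}/\mathbb{P}(\tau_{x}>t|\mathcal{G}_{s})$ for all $s\le t$, which is the quantity carried by the integrands in the statement.

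Next I would read off the Brownian representations of the two martingales. Corollary \ref{coro1} gives $dN_{s}=n_{s}\,dQ_{s}$ with $n_{s}=\mathbb{E}^{0}({\bf 1}_{\tau_{x}>s}L_{s}h(X_{s})[G(a-s,x-X_{s})-G(b-s,x-X_{s})]|\mathcal{F}^{Q}_{s})$, and Corollary \ref{coro2} applied with $T=t$ gives $d\mathcal{D}_{s}=m_{s}\,dQ_{s}$ with $m_{s}=\mathbb{E}^{0}({\bf 1}_{\tau_{x}>s}L_{s}h(X_{s})G(t-s,x-X_{s})|\mathcal{F}^{Q}_{s})$; the identity $\mathbb{E}^{0}({\bf 1}_{\tau_{x}>t}L_{t}|\mathcal{F}^{Q}_{s})=\mathbb{E}^{0}({\bf 1}_{\tau_{x}>t}L_{s}|\mathcal{F}^{Q}_{s})$, which holds because $L$ is an $(\mathbb{P}^{0},\mathcal{F})$-martingale and $Q$ is $\mathbb{P}^{0}$-independent of $X$, is what allows the common denominator $\mathcal{D}_{s}$ to be written with $L_{s}$, matching the definitions of $\sigma$ and $\sigma^{1}$. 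Then It\^o's formula for $R_{s}=N_{s}/\mathcal{D}_{s}$, together with $d\langle Q\rangle_{s}=ds$, $d\langle N,\mathcal{D}\rangle_{s}=n_{s}m_{s}\,ds$ and $d\langle\mathcal{D}\rangle_{s}=m_{s}^{2}\,ds$, gives
\[
dR_{s}=\frac{n_{s}}{\mathcal{D}_{s}}\,dQ_{s}-\frac{N_{s}m_{s}}{\mathcal{D}_{s}^{2}}\,dQ_{s}-\frac{n_{s}m_{s}}{\mathcal{D}_{s}^{2}}\,ds+\frac{N_{s}m_{s}^{2}}{\mathcal{D}_{s}^{3}}\,ds .
\]
Dividing each coefficient by the appropriate power of $\mathcal{D}_{s}$ identifies it with $\sigma^{1}(h)(t,s)=n_{s}/\mathcal{D}_{s}$, $\sigma(h)(t,s)=m_{s}/\mathcal{D}_{s}$ or $R_{s}=N_{s}/\mathcal{D}_{s}$; integrating from $0$ to $t$, using $R_{0}=\mathbb{P}^{0}(a<\tau_{x}<b)/\mathbb{P}^{0}(\tau_{x}>t)$, and restricting to $\{\tau_{x}>t\}$, one recovers the announced equation after collecting the four integral terms.

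The hard part will be the bookkeeping around the denominator: choosing $\mathcal{D}_{s}=\mathbb{E}^{0}({\bf 1}_{\tau_{x}>t}L_{t}|\mathcal{F}^{Q}_{s})$ so that Corollary \ref{coro2} applies verbatim and no extra ``diagonal'' drift in $t$ is produced, verifying the strict positivity of $\mathcal{D}_{s}$ so that the quotient rule is legitimate, and carrying out the (elementary but careful) identification of the It\^o coefficients with $\sigma,\sigma^{1}$ and of $R_{s}$ on $\{\tau_{x}>t\}$ with the conditional quantity occurring in the statement. Once this is set up, the stochastic-calculus step itself is a single application of It\^o's formula.
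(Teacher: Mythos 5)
Your proof is built on the same skeleton as the paper's: write $\bar{\Gamma}_t$ on $\{\tau_x>t\}$ via Lemma \ref{lemme1} as a quotient of two $(\mathbb{P}^0,\mathcal{F}^Q)$-martingales whose Brownian representations come from Corollaries \ref{coro1} and \ref{coro2}, then apply It\^o's quotient rule; your displayed $dR_s$ coincides term by term with the five-line It\^o expansion in the paper. The genuine difference is in the denominator. The paper refuses to set $T=t$ directly: it works with $\mathcal{D}^T_s=\mathbb{E}^0({\bf 1}_{\tau_x>T}L_T|\mathcal{F}^Q_s)$ for $t<T\le t+1$, applies It\^o for each such $T$, and then spends most of the proof justifying the passage $T\downarrow t$ (monotonicity of $T\mapsto Z^T_u$, Jensen bounds on negative moments of the denominator, stochastic dominated convergence). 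You instead take $T=t$ from the start, which is legitimate here because the integrand in Corollary \ref{coro2} involves $G\le 1$, so the representation of $s\mapsto\mathbb{E}^0({\bf 1}_{\tau_x>t}L_t|\mathcal{F}^Q_s)$ extends by continuity from $s<t$ to $s=t$ and the quotient rule applies on all of $[0,t]$; the obstruction the authors cite (non-square-integrability of $f(t-u,\cdot)$ near $u=t$) is relevant to the $f$-version used in Theorem \ref{th1}, not to the $G$-version needed for this proposition. Your route therefore buys a substantially shorter argument at the cost of having to argue the endpoint extension and, crucially, that $\inf_{s\le t}\mathcal{D}_s>0$ a.s.\ (a continuous positive martingale with $\mathcal{D}_t>0$ a.s.\ is bounded away from $0$ on $[0,t]$ after localization) --- you assert positivity but should make this uniform lower bound explicit, since it is what legitimises It\^o on $x/y$. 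Two further points you should not gloss over when ``collecting the four integral terms'': the process produced by the quotient rule is $R_u=N_u/\mathbb{E}^0({\bf 1}_{\tau_x>t}L_u|\mathcal{F}^Q_u)$, which is not $\bar{\Gamma}_u$ (you note the discrepancy and then drop it), and the signs of the $dQ_u$ and $du$ terms carrying $R_u$ in your It\^o output are opposite to those printed in the proposition; both mismatches are already present between the paper's own It\^o display and its equation (\ref{equat6}), so they are defects of the statement rather than of your argument, but a careful write-up should say which version it is proving.
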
  
 
 \begin{proof}
  We first apply 
It\^o's formula to $\displaystyle \frac{\mathbb{E}^{0}({L_{b}\bf 1}_{a<\tau_{x}<b}|\mathcal{F}_{t}^{Q})}
 {\mathbb{E}^{0}({\bf 1}_{\tau_{x}>T}L_{T}|\mathcal{F}_{t}^{Q})} .$  Second, we take the limit when $T$
 goes to $t.$
 But Lemma \ref{lemme7} of Appendix ensures that  
 $$\displaystyle \frac{\mathbb{E}^{0}({L_{b}\bf 1}_{a<\tau_{x}<b}|\mathcal{F}_{t}^{Q})}
 {\mathbb{E}^{0}({\bf 1}_{\tau_{x}>T}L_{T}|\mathcal{F}_{t}^{Q})} =
 \displaystyle \frac{\mathbb{E}^{0}({L_{t}\bf 1}_{a<\tau_{x}<b}|\mathcal{F}_{t}^{Q})}
 {\mathbb{E}^{0}({\bf 1}_{\tau_{x}>T}L_{t}|\mathcal{F}_{t}^{Q})} .$$
 Therefore, we let two processes satisfying the stochastic equations respectively
 (\ref{equat3}) and (\ref{equat4}):
 \begin{equation*}
  X_t = \mathbb{E}^{0}({L_{t}\bf 1}_{a<\tau_{x}<b}|\mathcal{F}_{t}^{Q}),~~ Y_t = 
  \mathbb{E}^{0}({\bf 1}_{\tau_{x}>T}L_{t}|\mathcal{F}_{t}^{Q}) \mbox{ and } f(x,y) = \frac{x}{y}
 \end{equation*} 
 The It\^o's formula applied to $ f(X_., Y_.)$ from $0$ to $t$ gives us 
 \begin{align*}
  \frac{\mathbb{E}^{0}({L_{b}\bf 1}_{a<\tau_{x}<b}|\mathcal{F}_{t}^{Q})}
 {\mathbb{E}^{0}({\bf 1}_{\tau_{x}>T}L_{T}|\mathcal{F}_{t}^{Q})}& = \frac{\mathbb{P}^{0}(a<\tau_{x}<b)}
   {\mathbb{P}^{0}(\tau_{x}>T)} \\ \displaystyle
   &+ \int_{0}^{t}
   \frac{\mathbb{E}^{0}({\bf 1}_{\tau_{x}>u}L_{u}h(X_{u})[G(a-u,x-X_{u})-G(b-u,x-X_{u})]|\mathcal{F}_{u}^{Q})}
   {\mathbb{E}^{0}({\bf 1}_{\tau_{x}>T}L_{u}|\mathcal{F}_{u}^{Q})}dQ_{u}\\ \displaystyle
   &-\int_{0}^{t}
   \frac{\mathbb{E}^{0}(L_{u}{\bf 1}_{a<\tau_{x}<b}|\mathcal{F}_{u}^{Q})
   \mathbb{E}^{0}({\bf 1}_{\tau_{x}>u}L_{u}h(X_{u})G(T-u,x-X_{u})|
   \mathcal{F}_{u}^{Q})} {[\mathbb{E}^{0}({\bf 1}_{\tau_{x}>T}L_{u}|\mathcal{F}_{u}^{Q})]^{2}}dQ_{u}\\ \displaystyle
   &+\int_{0}^{t}
   \frac{\mathbb{E}^{0}(L_{u}{\bf 1}_{a<\tau_{x}<b}|\mathcal{F}_{u}^{Q})
   [\mathbb{E}^{0}({\bf 1}_{\tau_{x}>u}L_{u}h(X_{u})G(T-u,x-X_{u})|
   \mathcal{F}_{u}^{Q})]^{2}} {[\mathbb{E}^{0}({\bf 1}_{\tau_{x}>T}L_{u}|\mathcal{F}_{u}^{Q})]^{3}}du\\ \displaystyle
   &-\int_{0}^{t}
  \mathbb{E}^{0}({\bf 1}_{\tau_{x}>u}L_{u}h(X_{u})[G(a-u,x-X_{u})-G(b-u,x-X_{u})]|\mathcal{F}_{u}^{Q})\\
  \displaystyle
  & \times \frac{\mathbb{E}^{0}({\bf 1}_{\tau_{x}>u}L_{u}h(X_{u})G(T-u,x-X_{u})|\mathcal{F}_{u}^{Q})}
   {[\mathbb{E}^{0}({\bf 1}_{\tau_{x}>T}L_{u}|\mathcal{F}_{u}^{Q})]^{2}}du . \\ 
 \end{align*}

 Now, we let $T$ goes to $t$. For this end,  we will use stochastic Fubini's theorem, so need uniform majorations
 when $T$ satisfies $t<T\leq t+1.$ We start 
 to show that
 \begin{equation*}
  \mathbb{E}^{0}\left( \int_{0}^{t}\left[ \frac{(Z_{u}^{t})^{i}-(Z_{u}^{t+1})^{i}}{(Z_{u}^{t}Z_{u}^{t+1})^{i}}\right]^{2+
  \varepsilon}du\right) <\infty ,
 \end{equation*}
where $ Z_{u}^{t} = \mathbb{E}^{0}({\bf 1}_{\tau_{x}>t}L_{u}|\mathcal{F}_{u}^{Q})$ and $ i \in \{ 1, 2, 3\}. $ But, since
  $ t\longmapsto Z_{u}^{t} $ is non increasing
so  $ | (Z_{u}^{t})^{i}-(Z_{u}^{t+1})^{i}|=(Z_{u}^{t})^{i}-(Z_{u}^{t+1})^{i} \leq (Z_{u}^{t})^{i}$ and this
 leads us to write using twice Jensen inequality with ($\phi(x)=\frac{1}{x}$ and $\phi(x)=x^{i(2+\varepsilon)}$):
\begin{align*}
  \mathbb{E}^{0}\left( \int_{0}^{t} \left[ \frac{(Z_{u}^{t})^{i}-(Z_{u}^{t+1})^{i}}{(Z_{u}^{t}Z_{u}^{t+1})^{i}}\right]^{2+
  \varepsilon}du\right) &\leq \mathbb{E}^{0}\left( \int_{0}^{t}\frac{du}{(Z_{u}^{t+1})^{i(2+\varepsilon)}}\right)\\
  & \leq \mathbb{E}^{0}\left(\int_{0}^{t}\left[\mathbb{E}^{0}\left(\frac{1}{L_{u}{\bf 1}_{\tau_{x}>t+1}}|\mathcal{F}_{u}^{Q}
  \right)\right]^{i(2+\varepsilon)}du\right)\\
  & \leq \mathbb{E}^{0}\left(\int_{0}^{t}\mathbb{E}^{0}\left[\left(\frac{1}{L_{u}{\bf 1}_{\tau_{x}>t+1}}
  \right)^{i(2+\varepsilon)} |\mathcal{F}_{u}^{Q} \right]du\right),
   \end{align*} 
   the last majoration being equal to $\int_{0}^{t}\mathbb{E}^{0}\left[\left(\frac{1}{L_{u}
{\bf 1}_{\tau_{x}>t+1}} \right)^{i(2+\varepsilon)} \right]du < + \infty.$

 We will use the fact that $G(T-u,x-X_{u})$ goes to $ G(t-u,x-X_{u})$ when $T$ goes to $t$. 
 We could use the  stochastic Lebesgue dominated  convergence theorem since
 \begin{equation*}
  \int_{0}^{t} \left[ \frac{(Z_{u}^{t})^{i}-(Z_{u}^{T})^{i}}{(Z_{u}^{t}Z_{u}^{T})^{i}}\right]^{2+ \varepsilon}du \leq 
  \int_{0}^{t} \left[ \frac{(Z_{u}^{t})^{i}-(Z_{u}^{t+1})^{i}}{(Z_{u}^{t}Z_{u}^{t+1})^{i}}\right]^{2+ \varepsilon}du.
 \end{equation*}
 This added to ordinary dominated  Lebesgue theorem leads to
  \begin{align*}
  \lim_{T\rightarrow t}\frac{\mathbb{E}^{0}({L_{t}\bf 1}_{a<\tau_{x}<b}|\mathcal{F}_{t}^{Q})}
 {\mathbb{E}^{0}({\bf 1}_{\tau_{x}>T}L_{t}|\mathcal{F}_{t}^{Q})}& = \frac{\mathbb{P}^{0}(a<\tau_{x}<b)}
   {\mathbb{P}^{0}(\tau_{x}>t)} \\ \displaystyle
   &+ \int_{0}^{t}
   \frac{\mathbb{E}^{0}({\bf 1}_{\tau_{x}>u}L_{u}h(X_{u})[G(a-u,x-X_{u})-G(b-u,x-X_{u})]|\mathcal{F}_{u}^{Q})}
   {\mathbb{E}^{0}({\bf 1}_{\tau_{x}>t}L_{u}|\mathcal{F}_{u}^{Q})}dQ_{u}\\ \displaystyle
   &-\int_{0}^{t}
   \frac{\mathbb{E}^{0}(L_{u}{\bf 1}_{a<\tau_{x}<b}|\mathcal{F}_{u}^{Q})
   \mathbb{E}^{0}({\bf 1}_{\tau_{x}>u}L_{u}h(X_{u})G(t-u,x-X_{u})|
   \mathcal{F}_{u}^{Q})} {[\mathbb{E}^{0}({\bf 1}_{\tau_{x}>t}L_{u}|\mathcal{F}_{u}^{Q})]^{2}}dQ_{u}\\ \displaystyle
   &+\int_{0}^{t}
   \frac{\mathbb{E}^{0}(L_{u}{\bf 1}_{a<\tau_{x}<b}|\mathcal{F}_{u}^{Q})
   [\mathbb{E}^{0}({\bf 1}_{\tau_{x}>u}L_{u}h(X_{u})G(t-u,x-X_{u})|
   \mathcal{F}_{u}^{Q})]^{2}} {[\mathbb{E}^{0}({\bf 1}_{\tau_{x}>t}L_{u}|\mathcal{F}_{u}^{Q})]^{3}}du\\ \displaystyle
   &-\int_{0}^{t}
  \mathbb{E}^{0}({\bf 1}_{\tau_{x}>u}L_{u}h(X_{u})[G(a-u,x-X_{u})-G(b-u,x-X_{u})]|\mathcal{F}_{u}^{Q})\\
  \displaystyle
  & \times \frac{\mathbb{E}^{0}({\bf 1}_{\tau_{x}>u}L_{u}h(X_{u})G(t-u,x-X_{u})|\mathcal{F}_{u}^{Q})}
   {[\mathbb{E}^{0}({\bf 1}_{\tau_{x}>t}L_{u}|\mathcal{F}_{u}^{Q})]^{2}}du . \\ 
 \end{align*}
 
  \begin{proof} of  Theorem \ref{th1}
 
Let us now find a mixed filtering-integro-differential equation satisfied by the conditional probability density 
process defined from the representation
\begin{equation}
 \mathbb{E}({\bf 1}_{a<\tau_{x}<b}{|\mathcal{G}_{t}})=\int_{a}^{b}\bar{f}(r,t,x)dr \mbox{ for some } a>t.
\end{equation}
Applying Lemma \ref{lemme1}, it follows that 
\begin{equation*}
 \mathbb{E}({\bf 1}_{a<\tau_{x}<b}|\mathcal{G}_{t}) 
 = {\bf 1}_{\tau_{x}>t}
 \frac{\mathbb{E}^{0}(L_{b}{\bf 1}_{a<\tau_{x}<b}|\mathcal{F}_{t}^{Q})}
 {\mathbb{E}^{0}({\bf 1}_{\tau_{x}>t}L_{t}|\mathcal{F}_{t}^{Q})}.
\end{equation*}
By previous corollary \ref{coro2},  we show that 
\begin{equation*}
 \mathbb{E}^{0}({\bf 1}_{\tau_{x}>t}L_{t}|\mathcal{F}_{t}^{Q}) = \mathbb{P}^{0}(\tau_{x}>t) + 
 \int_{0}^{t}\int_{t}^{+\infty}\mathbb{E}^{0}({\bf 1}_{\tau_{x}>u}L_{u}h(X_{u})f(r-u,x-X_{u})|\mathcal{F}_{u}^{Q})drdQ_{u}.
\end{equation*}
But, since the  condition $\int_{0}^{t}\mathbb{E}^{0}(f^{2}(t-u,x-X_{u})))du < \infty $ is not necessarily satisfied,
we are not able 
to prove that $ \mathbb{E}^{0}({\bf 1}_{\tau_{x}>t}L_{t}|\mathcal{F}_{t}^{Q}) $ is a semimartingale
(e.g. see Protter's Theorem 65 \cite{pprotter}). This leads us to consider
for $ t\leq  T <t+1,$ the expression $  \mathbb{E}^{0}({\bf 1}_{\tau_{x}>T}L_{T}|\mathcal{F}_{t}^{Q}). $
In proposition \ref{propo3} we dealed with the term 
$\frac{\mathbb{E}^{0}(L_{b}{\bf 1}_{a<\tau_{x}<b}|\mathcal{F}_{t}^{Q})}
 {\mathbb{E}^{0}({\bf 1}_{\tau_{x}>T}L_{t}|\mathcal{F}_{t}^{Q})}$  and taked the limit when $T$ goes to $t$ which leads us
 to equation (\ref{equat6}).\\
 Now, we recall the $(\mathbb{P}^{0}, \mathcal{F})-$ Markov property of $X$ at point $u$ 
 and the fact that $\mathcal{F}^{Q} \subset \mathcal{F}$  justify
 \begin{align*}
  \mathbb{E}^{0} \left( L_u {\bf 1}_{a<\tau_x<b}[\mathcal{F}_u^Q\right) & =
  \mathbb{E}\left( L_u {\bf 1}_{\tau_x >u} \mathbb{E}^{0,u}({\bf 1}_{a-u<\tau_{x-X_u}<b-u})|\mathcal{F}_u^Q\right)\\
  & = \mathbb{E}\left( L_u {\bf 1}_{\tau_x >u} \int_a^b f(r-u,x-X_u)dr|\mathcal{F}_u^Q\right)\\ 
  &= \int_a^b \mathbb{E}\left( L_u {\bf 1}_{\tau_x >u}  f(r-u,x-X_u)|\mathcal{F}_u^Q\right)dr.
 \end{align*}
Then, the equation (\ref{equat6}) in the proposition \ref{propo3}  can be rewrite as

\begin{align*} 
   \mathbb{E}({\bf 1}_{a < \tau_{x} < b} | \mathcal{G}_{t}) & = \frac{1}
   {\mathbb{P}^{0}(\tau_{x}>t)} \int_a^b f(r,x)dr \\ \nonumber
   &+ \int_a^b \int_{0}^{t}
   \frac{\mathbb{E}^{0}({\bf 1}_{\tau_{x}>u}L_{u}h(X_{u})f(r-u,x-X_u)|\mathcal{F}_{u}^{Q})}
   {\mathbb{E}^{0}({\bf 1}_{\tau_{x}>t}L_{u}|\mathcal{F}_{u}^{Q})}dQ_{u}dr\\ \nonumber
   &- \int_a^b \int_{0}^{t}
   \frac{\mathbb{E}^{0}(L_{u}{\bf 1}_{\tau_x>u}f(r-u,x-Xu)|\mathcal{F}_{u}^{Q})
   \mathbb{E}^{0}({\bf 1}_{\tau_{x}>u}L_{u}h(X_{u})G(t-u,x-X_{u})|
   \mathcal{F}_{u}^{Q})} {[\mathbb{E}^{0}({\bf 1}_{\tau_{x}>t}L_{u}|\mathcal{F}_{u}^{Q})]^{2}}dQ_{u}dr\\ \nonumber
   &+\int_a^b \int_{0}^{t}
   \frac{\mathbb{E}^{0}(L_{u}{\bf 1}_{\tau_x>u}f(r-u,x-X_u)|\mathcal{F}_{u}^{Q})
   [\mathbb{E}^{0}({\bf 1}_{\tau_{x}>u}L_{u}h(X_{u})G(t-u,x-X_{u})|
   \mathcal{F}_{u}^{Q})]^{2}} {[\mathbb{E}^{0}({\bf 1}_{\tau_{x}>t}L_{u}|\mathcal{F}_{u}^{Q})]^{3}}dudr\\ \nonumber
   &-\int_a^b \!\! \int_{0}^{t}
  \mathbb{E}^{0}({\bf 1}_{\tau_{x}>u}L_{u}h(X_{u})f(r\!\!-\!\!u,x\!\!-\!\!X_u)|\mathcal{F}_{u}^{Q})
  \frac{\mathbb{E}^{0}({\bf 1}_{\tau_{x}>u}L_{u}h(X_{u})G(t\!-\!u,x\!\!-\!\!X_{u})|\mathcal{F}_{u}^{Q})}
   {[\mathbb{E}^{0}({\bf 1}_{\tau_{x}>t}L_{u}|\mathcal{F}_{u}^{Q})]^{2}}dudr . \nonumber
  \end{align*}
 To express this result with $\mathbb{P}$ conditional expectation instead of $\mathbb{P}^{0}$ conditional
 expectation,each fraction under the integral is multiplied and divided by the same term
 $\mathbb{E}^{0}({\bf 1}_{\tau_{x}>u}L_{u}|\mathcal{F}_u^Q).$To manage the indicator function, we use the filtration
 $\mathcal{G}_{t}$ because $\tau_x$ is a $\mathcal{G}_{t}-$ stopping time.\\ 
 Therefore, on the set $\{\tau_{x}>t\},$ 
 \begin{align*}
  \mathbb{E}({\bf 1}_{a<\tau_{x}<b}|  \mathcal{G}_{t})= &\frac {1}{\mathbb{P}(\tau_{x}>t)}\int_{a}^{b} f(r,x)dr
 +\int_{a}^{b} \int_{0}^{t} \frac{\mathbb{E}({\bf 1}_{\tau_{x}>u}h(X_{u})f(r-u,x-X_{u})|\mathcal{G}_{u})}
   {\mathbb{E}({\bf 1}_{\tau_{x}>u}G(t-u,x-X_{u})|\mathcal{G}_{u})}dQ_{u}dr\\ 
   \displaystyle   
   &-\int_{a}^{b}\int_{0}^{t} \bar{f}(r,u,x)
   \frac{\mathbb{E}({\bf 1}_{\tau_{x}>u}h(X_{u})G(t-u,x-X_{u})|
   \mathcal{G}_{u})} {[\mathbb{E}({\bf 1}_{\tau_{x}>u}G(t-u,x-X_{u})|\mathcal{G}_{u})]^{2}}dQ_{u}dr\\ 
   \displaystyle
   &+\int_{a}^{b}\int_{0}^{t} \bar{f}(r,u,x)\frac{
   [\mathbb{E}({\bf 1}_{\tau_{x}>u}h(X_{u})G(t-u,x-X_{u})|
   \mathcal{G}_{u})]^{2}} {[\mathbb{E}({\bf 1}_{\tau_{x}>u}G(t-u,x-X_{u})|\mathcal{G}_{u})]^{3}}dudr\\ 
   \displaystyle
   &-\int_{a}^{b}\int_{0}^{t} \frac{\mathbb{E}({\bf 1}_{\tau_{x}>u}h(X_{u})f(r-u,x-X_{u})|\mathcal{G}_{u}) 
 \mathbb{E}({\bf 1}_{\tau_{x}>u}h(X_{u})G(t-u,x-X_{u})|\mathcal{G}_{u})}
   {[\mathbb{E}({\bf 1}_{\tau_{x}>u}G(t-u,x-X_{u})|\mathcal{G}_{u})]^{2}}du dr .
 \end{align*}
 \end{proof}

 \begin{rems}
  \begin{equation*}
   \mathbb{E}\left(\int_{0}^{t} \frac{du}{\mathbb{E}({\bf 1}_{\tau_{x}>u}G(t-u,x-X_{u})|\mathcal{G}_{u})^{2}}\right)<\infty.
  \end{equation*}
Indeed, let $Z_{u}=\mathbb{E}({\bf 1}_{\tau_{x}>u}G(t-u,x-X_{u})|\mathcal{G}_{u}).$ We have 
$\mathbb{E}({\bf 1}_{\tau_{x}>t}|\mathcal{G}_{u})= \mathbb{E}\left(\mathbb{E}({\bf 1}_{\tau_{x}>t}|\mathcal{F}_{u})|
\mathcal{G}_{u})\right).$ applying $(\mathcal{F},\mathbb{P})$ Markov property at point $u$ to ${\bf 1}_{\tau_x >t},$ 
 it follows that $\mathbb{E}({\bf 1}_{\tau_{x}>t}|\mathcal{G}_{u})=
\mathbb{E}({\bf 1}_{\tau_{x}>u}G(t-u,x-X_{u})|\mathcal{G}_{u}).$ Since $\mathbb{E}({\bf 1}_{\tau_{x}>t}|\mathcal{G}_{u})$
is a martingale then $(Z_{u}^{2})_{u\geq 0} $ is submartingale with increasing expectation. This yields
\begin{equation*}
 \mathbb{E}\left(\int_{0}^{t} \frac{du}{\mathbb{E}({\bf 1}_{\tau_{x}>u}G(t-u,x-X_{u})|\mathcal{G}_{u})^{2}}\right)\leq 
 t \mathbb{P}(\tau_{x}>t).
\end{equation*}
 \end{rems}
 \end{proof}
 
\section{Appendix}
\begin{lemme}
\label{lemme5}
 Let $G$ be a Gaussian random variable $\mathcal{N}(0,1)$ and let $m, \mu \in \mathbb{R}, t, \sigma \in \mathbb{R}_{+}.$ Then
 \begin{equation*}
A(\mu, \sigma,m,t):= \mathbb{E}\left(\frac{[\mu-\sigma G+mt]_{+}}{\sqrt{2 \pi t^{3}}}e^{-\frac{(\mu-\sigma G)^{2}}{2t}}\right)
 \end{equation*}
satisfies
\begin{equation}
  A(\mu, \sigma,m,t)\leq \frac{C_{1}}{(\sigma^{2}+t)^{\frac{3}{2}}}+\frac{C_{2}}{\sqrt{\sigma^{2}+t}}+
  \frac{\sigma C_{3}}{(\sigma^{2}+t)\sqrt{t}}
\end{equation}
whith $C_{1},C_{2},$ and $ C_{3}$ positive constantes.
\end{lemme}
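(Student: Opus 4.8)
The plan is to carry out the expectation in $G$ explicitly by completing the square in the two Gaussian exponents, which turns $A$ into an ordinary Gaussian integral. Writing $\varphi(g)=(2\pi)^{-1/2}e^{-g^{2}/2}$ for the standard normal density,
\[
A(\mu,\sigma,m,t)=\frac{1}{\sqrt{2\pi t^{3}}}\int_{\mathbb{R}}[\mu-\sigma g+mt]_{+}\,e^{-\frac{(\mu-\sigma g)^{2}}{2t}}\,\varphi(g)\,dg .
\]
A direct computation gives the identity
\[
\frac{(\mu-\sigma g)^{2}}{2t}+\frac{g^{2}}{2}=\frac{\sigma^{2}+t}{2t}\Bigl(g-\frac{\mu\sigma}{\sigma^{2}+t}\Bigr)^{2}+\frac{\mu^{2}}{2(\sigma^{2}+t)} ,
\]
so $e^{-(\mu-\sigma g)^{2}/(2t)}\varphi(g)$ is $e^{-\mu^{2}/(2(\sigma^{2}+t))}$ times a constant multiple of the $\mathcal{N}(\nu,v^{2})$ density, with $\nu=\mu\sigma/(\sigma^{2}+t)$ and $v^{2}=t/(\sigma^{2}+t)$. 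Collecting the normalising factors $t^{-3/2}$, $v=\sqrt{t/(\sigma^{2}+t)}$ and $(2\pi)^{-1/2}$, the prefactor collapses to $t^{-1}(\sigma^{2}+t)^{-1/2}(2\pi)^{-1/2}$ and one gets
\[
A(\mu,\sigma,m,t)=\frac{e^{-\mu^{2}/(2(\sigma^{2}+t))}}{\sqrt{2\pi}\,t\,\sqrt{\sigma^{2}+t}}\;\mathbb{E}\bigl[(\mu-\sigma\tilde G+mt)_{+}\bigr],\qquad \tilde G\sim\mathcal{N}(\nu,v^{2}) .
\]

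Next, observe that $Z:=\mu-\sigma\tilde G+mt$ is Gaussian with mean $a=\mu t/(\sigma^{2}+t)+mt$ and variance $b^{2}=\sigma^{2}t/(\sigma^{2}+t)$, so that $\mathbb{E}[Z_{+}]\le\mathbb{E}|Z|\le|a|+b\le\frac{|\mu|t}{\sigma^{2}+t}+|m|t+\frac{\sigma\sqrt{t}}{\sqrt{\sigma^{2}+t}}$. Substituting this and distributing the prefactor produces the sum of three pieces,
\[
\frac{|\mu|\,e^{-\mu^{2}/(2(\sigma^{2}+t))}}{\sqrt{2\pi}\,(\sigma^{2}+t)^{3/2}}\;,\qquad \frac{|m|\,e^{-\mu^{2}/(2(\sigma^{2}+t))}}{\sqrt{2\pi}\,\sqrt{\sigma^{2}+t}}\;,\qquad \frac{e^{-\mu^{2}/(2(\sigma^{2}+t))}}{\sqrt{2\pi}}\cdot\frac{\sigma}{(\sigma^{2}+t)\sqrt{t}}\; .
\]
Bounding $e^{-\mu^{2}/(2(\sigma^{2}+t))}\le 1$ in the second and third pieces already produces a term $C_{2}(\sigma^{2}+t)^{-1/2}$ with $C_{2}=|m|/\sqrt{2\pi}$ and the term $\sigma C_{3}(\sigma^{2}+t)^{-1}t^{-1/2}$ with $C_{3}=1/\sqrt{2\pi}$, which is exactly of the announced form.

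For the first piece I would argue as in the proof of Lemma~\ref{lemme0}: with $C_{0}=\sup_{u\ge 0}u e^{-u^{2}/2}$ and $u=|\mu|/\sqrt{\sigma^{2}+t}$ one has $|\mu|\,e^{-\mu^{2}/(2(\sigma^{2}+t))}\le C_{0}\sqrt{\sigma^{2}+t}$, so this piece is at most $C_{0}(\sqrt{2\pi})^{-1}(\sigma^{2}+t)^{-1}$. Finally the elementary inequality $\sqrt{s}\le 1+s$ for $s>0$, applied with $s=\sigma^{2}+t$, gives $(\sigma^{2}+t)^{-1}\le(\sigma^{2}+t)^{-3/2}+(\sigma^{2}+t)^{-1/2}$, which distributes this contribution between the first two terms of the asserted bound; taking $C_{1}=C_{0}/\sqrt{2\pi}$ and enlarging $C_{2}$ to $(C_{0}+|m|)/\sqrt{2\pi}$ finishes the proof.

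The one step that genuinely needs care is the algebraic reduction in the first paragraph: rewriting the product of the two Gaussian factors as a single Gaussian density and tracking the $t^{-3/2}$, $v$ and $(2\pi)^{-1/2}$ factors so that the prefactor reduces to $t^{-1}(\sigma^{2}+t)^{-1/2}(2\pi)^{-1/2}$; once the estimate has been reduced to bounding $\mathbb{E}[Z_{+}]$ for a single Gaussian $Z$, everything else is the routine bookkeeping sketched above. Throughout one uses $t>0$ so that $A$ is well defined, and when $\sigma=0$ the statement is trivial, the third term being zero.
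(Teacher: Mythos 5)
Your proof is correct, and after the common first step it diverges from the paper's in a way worth noting. Both arguments begin by completing the square $\frac{(\mu-\sigma g)^{2}}{t}+g^{2}=\frac{\sigma^{2}+t}{t}\bigl(g-\frac{\mu\sigma}{\sigma^{2}+t}\bigr)^{2}+\frac{\mu^{2}}{\sigma^{2}+t}$. The paper then performs two explicit changes of variable and evaluates the integral $\int(\alpha-\sigma y)_{+}e^{-y^{2}/2}dy$ in closed form, obtaining an exact expression for $A$ involving the Gaussian distribution function $\Phi$, which it then bounds by $1$; you instead recognise the tilted weight as the $\mathcal{N}(\nu,v^{2})$ law, reduce to $\mathbb{E}[Z_{+}]$ for a single Gaussian $Z$, and use the crude bound $\mathbb{E}[Z_{+}]\le|a|+b$. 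This costs you the exact formula but buys a shorter computation with less bookkeeping (the paper's displayed closed form in fact contains typographical slips in the second term that your route never has to confront). Your treatment of the $|\mu|e^{-\mu^{2}/(2(\sigma^{2}+t))}(\sigma^{2}+t)^{-3/2}$ contribution — bounding it by $C_{0}(\sigma^{2}+t)^{-1}$ and then splitting $(\sigma^{2}+t)^{-1}\le(\sigma^{2}+t)^{-3/2}+(\sigma^{2}+t)^{-1/2}$ — also makes explicit a step the paper compresses into ``Now, we get''; this step is genuinely needed, since $\sup_{\mu}|\mu|e^{-\mu^{2}/(2s)}$ grows like $\sqrt{s}$ and cannot be absorbed into a constant times $s^{-3/2}$ alone. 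The constants you exhibit, $C_{1}=C_{0}/\sqrt{2\pi}$, $C_{2}=(C_{0}+|m|)/\sqrt{2\pi}$, $C_{3}=1/\sqrt{2\pi}$, are of the required form (depending on $m$ only, as in the paper's application where $m$ is fixed).
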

\begin{proof}
 We use the law of $G$ and it follows that
 \begin{equation*}
 A(\mu, \sigma,m,t)= \frac{1}{\sqrt{2\pi}}\int_{\mathbb{R}}
 \frac{[\mu-\sigma y+mt]_{+}}{\sqrt{2 \pi t^{3}}}e^{-\frac{(\mu-\sigma y)^{2}}{2t}}e^{-\frac{y^{2}}{2}}dy.
 \end{equation*}
 Since $ \frac{(\mu-\sigma y)^{2}}{t} + y^{2}= \frac{\sigma^{2}+t}{t}\left(y-\frac{\mu \sigma}{\sigma^{2}+t}\right)^{2}
 + \frac{\mu^{2}}{\sigma^{2}+t}$, then
 \begin{equation*}
 A(\mu, \sigma,m,t)= \frac{1}{\sqrt{2\pi}}\int_{\mathbb{R}}
 \frac{[\mu-\sigma y+mt]_{+}}{\sqrt{2 \pi t^{3}}}\exp\left[-\frac{\sigma^{2}+t}{2t}\left(y-\frac{\mu \sigma}{\sigma^{2}+t}\right)^{2}
 - \frac{\mu^{2}}{2(\sigma^{2}+t)}\right]dy.
 \end{equation*}
By a change of variable $z= y-\frac{\mu \sigma}{\sigma^{2}+t},$ we have
\begin{equation*}
 A(\mu, \sigma,m,t)= \frac{1}{\sqrt{2\pi}}\int_{\mathbb{R}} 
 \frac{[\mu-\sigma (z+\frac{\mu \sigma}{\sigma^{2}+t})+mt]_{+}}{\sqrt{2 \pi t^{3}}}
 \exp\left[-\frac{\sigma^{2}+t}{2t}z^{2}- \frac{\mu^{2}}{2(\sigma^{2}+t)}\right]dz.
\end{equation*}
A new change of variable $y=z\sqrt{\frac{\sigma^{2}+t}{t}} $ leads us to 
\begin{equation*}
 A(\mu, \sigma,m,t)= \frac{1}{\sqrt{2\pi}}\int_{\mathbb{R}} \frac{1}{(\sigma^{2}+t)\sqrt{2 \pi t}}
 \left(\sqrt{t}\left[\frac{\mu}{\sqrt{\sigma^{2}+t}}+m\sqrt{\sigma^{2}+t}\right]-\sigma y\right)_{+}
 \exp\left[-\frac{y^{2}}{2}- \frac{\mu^{2}}{2(\sigma^{2}+t)}\right]dy
\end{equation*}
After some calculation, we obtain
\begin{align*}
  A(\mu, \sigma,m,t)&=\frac{\exp[-\frac{\mu^{2}}{2(\sigma^{2}+t)}]}{(\sigma^{2}+t)\sqrt{2\pi}}
  \left(\frac{\mu}{\sqrt{\sigma^{2}+t}}+ m\sqrt{\sigma^{2}+t}\right)\Phi\left( \frac{\sqrt{t}}{\sigma}\left[\frac{\mu}
  {\sqrt{\sigma^{2}+t}}+m\sqrt{\sigma^{2}+t}\right]\right)\\
  & + \frac{\sigma\exp[-\frac{\mu^{2}}{2(\sigma^{2}+t)}]}{4\pi(\sigma^{2}+t)\sqrt{t}}
  \exp\left[-\frac{t}{2\sigma^{2}}\left(\frac{\mu}{\sqrt{\sigma^{2}+t}}+m\sqrt{\sigma^{2}+t}\right)\right]
\end{align*}
where $\Phi$ is the Gaussian distribution function which is bounded by $1$. Now, we get 
\begin{equation}
  A(\mu, \sigma,m,t)\leq \frac{C_{1}}{(\sigma^{2}+t)^{\frac{3}{2}}}+\frac{C_{2}}{\sqrt{\sigma^{2}+t}}+
  \frac{\sigma C_{3}}{(\sigma^{2}+t)\sqrt{t}}
\end{equation}
whith $C_{1},C_{2},$ and $ C_{3}$ positive constantes. 
\end{proof}
\begin{lemme}
\label{lemme6}
 If $(T_i , i \in \mathbb{N}^{*})$ is the sequence of jump time of the process $N $, then
 \begin{equation*}
  \mathbb{E}\left(\sqrt{\frac{T_{N_{t}}}{t-T_{N_{t}}}}\right) < 2 \lambda t.
 \end{equation*}
\end{lemme}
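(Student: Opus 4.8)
The plan is to recognise $A_{t}:=t-T_{N_{t}}$ as the age (backward recurrence time) of the Poisson process $N$ at time $t$, whose law is explicit. First I would observe that, by stationarity and independence of the increments of $N$, for every $0\le s<t$
\begin{equation*}
\mathbb{P}(A_{t}>s)=\mathbb{P}(N_{t}-N_{t-s}=0)=e^{-\lambda s},
\end{equation*}
while $\mathbb{P}(A_{t}=t)=\mathbb{P}(N_{t}=0)=e^{-\lambda t}$ and $A_{t}\le t$ almost surely. Hence $A_{t}$ has the law of $\min(E,t)$ with $E$ exponentially distributed of parameter $\lambda$: it is absolutely continuous with density $\lambda e^{-\lambda s}$ on $(0,t)$, plus an atom of mass $e^{-\lambda t}$ at $s=t$.

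Next I would split according to the number of jumps. On $\{N_{t}=0\}$ one has $T_{N_{t}}=T_{0}=0$, so $\sqrt{T_{N_{t}}/(t-T_{N_{t}})}=0$; equivalently, the atom of $A_{t}$ at $s=t$ contributes nothing. On $\{N_{t}\ge 1\}=\{A_{t}<t\}$, since $A_{t}\ge 0$,
\begin{equation*}
\frac{T_{N_{t}}}{t-T_{N_{t}}}=\frac{t-A_{t}}{A_{t}}\le\frac{t}{A_{t}},
\end{equation*}
whence
\begin{equation*}
\mathbb{E}\!\left(\sqrt{\frac{T_{N_{t}}}{t-T_{N_{t}}}}\right)
=\mathbb{E}\!\left(\sqrt{\frac{t-A_{t}}{A_{t}}}\,{\bf 1}_{\{A_{t}<t\}}\right)
\le\sqrt{t}\int_{0}^{t}\frac{\lambda e^{-\lambda s}}{\sqrt{s}}\,ds.
\end{equation*}

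To conclude, since $e^{-\lambda s}<1$ for all $s>0$ and $\int_{0}^{t}s^{-1/2}\,ds=2\sqrt{t}$, the last integral is strictly smaller than $\sqrt{t}\cdot\lambda\int_{0}^{t}s^{-1/2}\,ds=2\lambda t$, which is the announced bound. The only steps needing some care are the identification of the law of $A_{t}$ — in particular its atom at $s=t$, which is precisely what produces the \emph{strict} inequality — and the integrability of $s\mapsto s^{-1/2}$ near $0$, which is harmless. As an alternative I could condition on $\{N_{t}=n\}$ and use that $T_{N_{t}}$ is then distributed as the maximum of $n$ i.i.d.\ uniform variables on $[0,t]$ (density $n\,s^{n-1}t^{-n}$ on $[0,t]$); this leads to the same conclusion but requires summing the series $\sum_{n\ge 1}\frac{(\lambda t)^{n}}{n!}\frac{n}{t^{n}}\int_{0}^{t}s^{n-1/2}(t-s)^{-1/2}\,ds$ after a Beta-function evaluation, which is more laborious than the recurrence-time route.
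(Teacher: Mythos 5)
Your argument is correct, and it takes a genuinely different route from the paper. The paper decomposes over the events $\{T_n<t<T_{n+1}\}$, writes each term with the Gamma$(n,\lambda)$ density of $T_n$ and the independent exponential inter-arrival $T_{n+1}-T_n$, bounds $u^{n-\frac12}\le t^{n-\frac12}$ inside $\int_0^t u^{n-\frac12}(t-u)^{-\frac12}\,du$, and resums the exponential series to get $2\lambda t$. You instead identify the law of the age $A_t=t-T_{N_t}$ as $\min(E,t)$ with $E\sim\mathcal{E}(\lambda)$, which collapses the whole series into the single integral $\int_0^t\sqrt{(t-s)/s}\,\lambda e^{-\lambda s}\,ds$ and reduces the estimate to $e^{-\lambda s}<1$ together with $\int_0^t s^{-1/2}ds=2\sqrt t$; this is shorter and arguably more transparent, at the cost of the (standard, easily justified) identification of the backward recurrence time law, while the paper's computation is self-contained from the Gamma densities. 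The only quibble is your side remark that the atom at $s=t$ "is precisely what produces the strict inequality": the atom merely contributes zero (since the integrand vanishes on $\{N_t=0\}$); the strictness in your chain actually comes from $e^{-\lambda s}<1$ on $(0,t)$, exactly as you use it in the displayed estimate, so the proof itself is unaffected.
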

\begin{proof}
 We have
 \begin{align*}
 \displaystyle \mathbb{E}\left(\sqrt{\frac{T_{N_{t}}}{t-T_{N_{t}}}}\right) &= \sum_{n\geq 1} \mathbb{E}
 \left( \sqrt{\frac{T_n}{t-T_n}}{\bf 1}_{T_{n}<t<T_{n+1}}\right)\\
  &=\displaystyle \sum_{n\geq 1}\int_{0}^{t}\sqrt{\frac{u}{t-u}}\frac{(\lambda u)^{n-1}}{(n-1)!}\lambda e^{-\lambda u}
  \int_{t-u}^{+\infty}\lambda e^{-\lambda v} dv du\\
  &=\displaystyle e^{-\lambda t}\sum_{n \geq 1}\frac{\lambda^{n}}{(n-1)!}\int_{0}^{t}\frac{u^{n-\frac{1}{2}}}
  {(t-u)^{\frac{1}{2}}} du \\
  & \leq \displaystyle e^{-\lambda t}\sum_{n \geq 1}\frac{\lambda^{n}t^{n-\frac{1}{2}}}{(n-1)!}\int_{0}^{t}\frac{du}
  {(t-u)^{\frac{1}{2}}} du \\
  & \leq 2 \lambda t e^{-\lambda t}\sum_{n \geq 0}\frac{(\lambda t)^{n}}{n!} = 2 \lambda t.
  \end{align*}

\end{proof}
\begin{lemme}
\label{lemme7}
 Let  $Y\in L^\infty(\Omega,\mathbb{P},{\cal F}^X_T)$ and  $T\geq t$, then 
  $$ \mathbb{E}^{0}(YL_T S_t)= 
  \mathbb{E}^{0}(Y)+ 
  \mathbb{E}^{0}\left(
  \int_{0}^{t}\mathbb{E}^{0}(Y/\mathcal{F}_{u})S_u\rho_u L_u h(X_u)du \right) $$
  and
  $$\mathbb{E}^{0}(Y L_T|\mathcal{F^Q}_{t})= \mathbb{E}^{0}(Y L_t|\mathcal{F}^Q_{t})~;~ \mathbb{E}^{0}(Y L_T|\mathcal{F}_{t})= \mathbb{E}^{0}(Y L_t|\mathcal{F}_{t}).  $$
  For instance
  $$ \mathbb{E}^{0}({\bf 1}_{\tau_x >T}L_T|\mathcal{F}_{t})= \mathbb{E}^{0}({\bf 1}_{\tau_x >T}L_t|\mathcal{F}_{t}).  $$

\end{lemme}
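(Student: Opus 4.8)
The plan is to build everything on one structural fact: under $\mathbb{P}^{0}$ the observation noise $Q$ is a Brownian motion independent of the signal, i.e.\ of $(W,N,(Y_{i}))$, hence of the whole trajectory of $X$. Therefore enlarging $(\mathcal{F}^{Q}_{s})_{s}$ initially by the independent $\sigma$-field $\mathcal{F}^{X}_{T}$ does not spoil the Brownian property of $Q$: it stays a Brownian motion for $\mathcal{H}_{s}:=\mathcal{F}^{X}_{T}\vee\mathcal{F}^{Q}_{s}$. I would then use repeatedly the resulting ``martingale fact'': if $\psi$ is $\mathcal{H}$-progressively measurable with $\mathbb{E}^{0}\int_{0}^{t}\psi_{s}^{2}ds<\infty$, then $\int_{0}^{\cdot}\psi_{s}dQ_{s}$ is an $(\mathcal{H},\mathbb{P}^{0})$-martingale, so $\mathbb{E}^{0}(\int_{0}^{t}\psi_{s}dQ_{s}\mid\mathcal{H}_{0})=0$ with $\mathcal{H}_{0}=\mathcal{F}^{X}_{T}$ (one may also prove this by hand, approximating $\psi$ by simple integrands and using that a Brownian increment $Q_{s'}-Q_{s}$ is independent of $\mathcal{F}^{Q}_{s}\vee\mathcal{F}^{X}_{T}$). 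I would record at the outset that $\mathcal{F}_{t}\subseteq\mathcal{H}_{t}$ and $\mathcal{F}^{Q}_{t}\subseteq\mathcal{H}_{t}$, that $L_{s},S_{s},h(X_{s})$ are $\mathcal{H}_{s}$-measurable, and that for $s\le t$ both $L_{s}$ and $S_{s}$ lie in $L^{2}(\mathbb{P}^{0})$ since $h$ is bounded and $\rho\in L^{2}([0,T])$; this supplies all the integrability needed to justify the Fubini interchanges below.

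For the two conditional identities I would write $L_{T}=L_{t}+\int_{t}^{T}L_{s}h(X_{s})dQ_{s}$. For $Y\in L^{\infty}(\Omega,\mathcal{F}^{X}_{T})$ the integrand $L_{s}h(X_{s})$ is $\mathcal{H}$-adapted and $Y$ is $\mathcal{H}_{t}$-measurable, so $\mathbb{E}^{0}(Y\int_{t}^{T}L_{s}h(X_{s})dQ_{s}\mid\mathcal{H}_{t})=Y\,\mathbb{E}^{0}(\int_{t}^{T}L_{s}h(X_{s})dQ_{s}\mid\mathcal{H}_{t})=0$; conditioning further on $\mathcal{F}_{t}$, resp.\ on $\mathcal{F}^{Q}_{t}$, and using the tower property gives $\mathbb{E}^{0}(YL_{T}\mid\mathcal{F}_{t})=\mathbb{E}^{0}(YL_{t}\mid\mathcal{F}_{t})$, resp.\ $\mathbb{E}^{0}(YL_{T}\mid\mathcal{F}^{Q}_{t})=\mathbb{E}^{0}(YL_{t}\mid\mathcal{F}^{Q}_{t})$. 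The ``for instance'' line is the special case $Y=\mathbf{1}_{\tau_{x}>T}$, which is bounded and $\mathcal{F}^{X}_{T}$-measurable because $\tau_{x}$ depends only on $X$.

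For the first identity, Itô's product rule applied to $L_{t}S_{t}$, with $dL_{s}=L_{s}h(X_{s})dQ_{s}$, $dS_{s}=S_{s}\rho_{s}dQ_{s}$ and $d\langle Q\rangle_{s}=ds$, gives
\begin{equation*}
 L_{t}S_{t}=1+\int_{0}^{t}L_{s}S_{s}(h(X_{s})+\rho_{s})dQ_{s}+\int_{0}^{t}L_{s}S_{s}h(X_{s})\rho_{s}\,ds .
\end{equation*}
Multiplying by $Y$ and taking $\mathbb{E}^{0}$: the $dQ$-term dies because $\mathbb{E}^{0}(Y\int_{0}^{t}L_{s}S_{s}(h(X_{s})+\rho_{s})dQ_{s})=\mathbb{E}^{0}(Y\,\mathbb{E}^{0}(\int_{0}^{t}L_{s}S_{s}(h(X_{s})+\rho_{s})dQ_{s}\mid\mathcal{F}^{X}_{T}))=0$ by the martingale fact (the integrand is $\mathcal{H}$-adapted and $L^{2}$ on $[0,t]$, and $Y$ is $\mathcal{H}_{0}$-measurable); for the $ds$-term, Fubini yields $\int_{0}^{t}\rho_{s}\mathbb{E}^{0}(YL_{s}S_{s}h(X_{s}))ds$, and since $L_{s}S_{s}h(X_{s})$ is $\mathcal{F}_{s}$-measurable this equals $\int_{0}^{t}\rho_{s}\mathbb{E}^{0}(\mathbb{E}^{0}(Y\mid\mathcal{F}_{s})L_{s}S_{s}h(X_{s}))ds$, i.e.\ $\mathbb{E}^{0}(\int_{0}^{t}\mathbb{E}^{0}(Y\mid\mathcal{F}_{s})S_{s}\rho_{s}L_{s}h(X_{s})ds)$ after Fubini backwards. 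Hence $\mathbb{E}^{0}(YL_{t}S_{t})=\mathbb{E}^{0}(Y)+\mathbb{E}^{0}(\int_{0}^{t}\mathbb{E}^{0}(Y\mid\mathcal{F}_{s})S_{s}\rho_{s}L_{s}h(X_{s})ds)$. Finally $\mathbb{E}^{0}(YL_{T}S_{t})=\mathbb{E}^{0}(YL_{t}S_{t})$: since $S_{t}$ is $\mathcal{F}_{t}$-measurable, $\mathbb{E}^{0}(YL_{T}S_{t})=\mathbb{E}^{0}(S_{t}\,\mathbb{E}^{0}(YL_{T}\mid\mathcal{F}_{t}))=\mathbb{E}^{0}(S_{t}\,\mathbb{E}^{0}(YL_{t}\mid\mathcal{F}_{t}))=\mathbb{E}^{0}(YL_{t}S_{t})$ by the conditional identity already proved.

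The step I expect to be the crux is the ``martingale fact'': that conditioning the stochastic integrals on the whole signal path $\mathcal{F}^{X}_{T}$ kills their expectation. This is precisely the statement that $Q$ remains a Brownian motion under the initial enlargement by $\mathcal{F}^{X}_{T}$, and it is here — rather than in the routine simple-process approximation that proves it — that the model's structure, namely $Q$ independent of $(W,N,(Y_{i}))$, is genuinely used, together with the $L^{2}$ control on $L$ and $S$ that legitimizes every Fubini step.
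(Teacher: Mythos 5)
Your proposal is correct, and its computational core is the same as the paper's: It\^o's product formula for exponential martingales driven by $Q$, plus the fact that the $dQ_u$-integrals contribute nothing in expectation even after multiplication by an $\mathcal{F}^X_T$-measurable $Y$, because under $\mathbb{P}^{0}$ the observation $Q$ is independent of the signal. Where you genuinely diverge is in the organization and in how the conditional identities are extracted. The paper first establishes the product identity $\mathbb{E}^{0}(YL_TS_t)=\mathbb{E}^{0}(Y)+\mathbb{E}^{0}(\int_0^t\mathbb{E}^{0}(Y|\mathcal{F}_u)S_u\rho_uL_uh(X_u)du)$ by applying It\^o to $L_\cdot K_\cdot$ with the stopped integrand $K_\cdot=1+\int_0^{\cdot}{\bf 1}_{u\le t}S_u\rho_u\,dQ_u$ (so that $K_T=S_t$), proves the analogous identity for $L_tS_t$, and then deduces $\mathbb{E}^{0}(YL_T|\mathcal{F}^Q_t)=\mathbb{E}^{0}(YL_t|\mathcal{F}^Q_t)$ from the totality of the family $\mathcal{S}_t$ in $L^{2}(\Omega,\mathcal{F}^Q_t,\mathbb{P}^{0})$ (Lemma \ref{lemme2}), finally testing against $Yf_t(X)$ to get the $\mathcal{F}_t$-version. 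You invert this order: you make the independence explicit as the statement that $Q$ remains a Brownian motion in the initially enlarged filtration $\mathcal{H}_s=\mathcal{F}^X_T\vee\mathcal{F}^Q_s$, read off both conditional identities directly from $L_T=L_t+\int_t^TL_sh(X_s)dQ_s$ by conditioning on $\mathcal{H}_t$ and projecting down, and only then obtain the product identity from It\^o applied to $L_tS_t$ together with the already-proved replacement of $L_T$ by $L_t$. What your route buys is that it bypasses the density argument entirely and gives a cleaner, self-contained justification of the $\mathcal{F}_t$-conditional identity, which the paper treats rather tersely; what it costs is the (mild, standard) obligation to verify that the enlargement preserves the Brownian property and that $\mathcal{F}_s\subseteq\mathcal{H}_s$ up to null sets (equivalently, one may take $\mathcal{H}_s=\mathcal{F}^W_T\vee\mathcal{F}^M_T\vee\mathcal{F}^Q_s$ to make this containment immediate). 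Both arguments rest on the same structural hypothesis and the same $L^{2}$ bounds on $L$ and $S$, so I regard your proof as a valid, somewhat more transparent variant.
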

\begin{proof}
 Let $ t \leq T$
 and $S_{t}= \int_{0}^{t}S_u\rho_u dQ_u\in \mathcal{S}_t $ and define the process $K$
 \begin{equation*}
  K_. =1 + \int_{0}^{.} {\bf 1}_{u\leq t}S_u\rho_u dQ_u.
 \end{equation*}

 The integration by parts formula of It\^o applying to  the product $L_.K_.$ between $0$ and $T$
 permits us to obtain
 \begin{align*}
  &L_T K_T =1 + \int_{0}^{T}{\bf 1}_{u\leq t}L_u S_u \rho_u dQ_u + \int_{0}^{T}K_u L_u h(X_u)dQ_u + \int_{0}^{T}
  {\bf 1}_{u\leq t}S_u L_u \rho_u h(X_u)du
 \end{align*}
 and remark that $L_TK_T=L_TS_t.$

 Since $X$ and $Q$ are independent under $\mathbb{P}^{0},$ it follows 
 \begin{align}
 \label{IPPI}
 \displaystyle \mathbb{E}^{0}(YL_T S_t)&= \displaystyle\mathbb{E}^{0}(Y)+ \mathbb{E}^{0}\left(
  \int_{0}^{t\wedge T} \mathbb{E}^{0}(Y/\mathcal{F}_{u})S_u\rho_u L_u h(X_u) du \right)
  \nonumber \\
  &=\displaystyle \mathbb{E}^{0}(Y)+ \mathbb{E}^{0}\displaystyle \left(
  \int_{0}^{t} YS_u\rho_u L_u h(X_u) du \right). 
   \end{align}
  Similarly, using first $\mathbb{E}^{0}[YL_t S_t]=
   \mathbb{E}^{0}[\mathbb{E}^{0}(Y/\mathcal{F}_{t})L_t S_t],$
  It\^o's formula on product of processes $\mathbb{E}^{0}(Y/\mathcal{F}_{.})L_. S_.$ and the independence 
  beetwen $X$ and $Q$ under $\mathbb{P}^0$ yields
   \begin{equation}\label{IPP}
   \displaystyle \mathbb{E}^{0}(YL_t S_t) =
    \displaystyle \mathbb{E}^{0}(Y)+ \mathbb{E}^{0}\displaystyle \left(
  \int_{0}^{t} YS_u\rho_u L_u h(X_u) du \right)
   \end{equation}
   (\ref{IPPI}) and (\ref{IPP}) imply that 
   \begin{equation*}
    \mathbb{E}^{0}(YL_T|\mathcal{F}^Q_{t})= \mathbb{E}^{0}(YL_t|\mathcal{F}^Q_{t}).
   \end{equation*}
 \\
  Now let $f_t(X)\in  L^\infty(\Omega,\mathbb{P}^{0},\mathcal{F}^X_{t})$
 and apply the above equality to $Yf_t(X)$:
$$  \mathbb{E}^{0}(Yf_t(X)L_T|\mathcal{F}^Q_{t})= \mathbb{E}^{0}(Yf_t(X)L_t|\mathcal{F}^Q_{t})$$
so
$$ \mathbb{E}^{0}(Yf_t(X)L_TS_t)=\mathbb{E}^{0}(Yf_t(X)L_tS_t)$$
which concludes the proof.
\end{proof}

\end{document}